\documentclass[12pt]{article}
\usepackage[utf8]{inputenc}
\usepackage{amsmath,amssymb,amsthm}
\usepackage[inline]{enumitem}
\usepackage[nocompress, noadjust]{cite}
\usepackage{color}
\usepackage{tikz}
\usetikzlibrary{patterns}
\usepackage{subcaption}
\usepackage{authblk}
\usepackage{hyperref}

\title{Zero-sum squares in $\{-1, 1\}$-matrices with low discrepancy}
\author{Tom Johnston}
\affil{\small Mathematical Institute, University of Oxford, United Kingdom.\\ \href{mailto:thomas.johnston@maths.ox.ac.uk}{thomas.johnston@maths.ox.ac.uk} }
\date{}

\newtheorem{theorem}{Theorem}

\newtheorem{lemma}[theorem]{Lemma}

\newtheorem{problem}{Problem}
\newtheorem{claim}{Claim}

\newtheorem{observation}[theorem]{Observation}
\newtheorem{conjecture}[theorem]{Conjecture}

\newcommand{\floor}[1]{\left\lfloor #1 \right\rfloor}
\newcommand{\ceil}[1]{\left\lceil #1 \right\rceil}

\DeclareMathOperator{\disc}{disc}

\definecolor{blue3}{HTML}{64B5F6}
\definecolor{blue5}{HTML}{2196F3}
\definecolor{blue7}{HTML}{1976D2}

\definecolor{lightblue7}{HTML}{0288D1}
\definecolor{lightblue5}{HTML}{03A9F4}
\definecolor{lightblue3}{HTML}{4FC3F7}

\definecolor{yellow5}{HTML}{FFEB3B}
\definecolor{yellow7}{HTML}{FBC02D}

\begin{document}
\maketitle

\begin{abstract}
	Given a matrix $M = (a_{i,j})$ a square is a $2 \times 2$ submatrix with entries $a_{i,j}$, $a_{i, j+s}$, $a_{i+s, j}$, $a_{i+s, j +s}$ for some $s \geq 1$, and a zero-sum square is a square where the entries sum to $0$. Recently, Ar\'evalo, Montejano and Rold\'an-Pensado proved that all large $n \times n$ $\{-1,1\}$-matrices $M$ with  discrepancy $|\sum a_{i,j}| \leq n$ contain a zero-sum square unless they are split. We improve this bound by showing that all large $n \times n$ $\{-1,1\}$-matrices $M$ with discrepancy at most $n^2/4$ are either split or contain a zero-sum square. Since zero-sum square free matrices with discrepancy at most $n^2/2$ are already known, this bound is asymptotically optimal.
\end{abstract}

\section{Introduction}

A \emph{square} $S$ in a matrix $M= \left( a_{i,j} \right)$ is a $2 \times 2$ submatrix of the form \[S = \left(\begin{array}{cc}
			a_{i,j}   & a_{i, j+s}   \\
			a_{i+s,j} & a_{i+s, j+s}
		\end{array}\right).
\]

In 1996 Erickson \cite{erickson1996introduction} asked for the largest $n$ such that there exists an $n \times n$ binary matrix $M$ with no squares which have constant entries. An upper bound was first given by Axenovich and Manske \cite{axenovich2008monochromatic} before the answer, 14, was determined by Bacher and Eliahou in \cite{bacher2010extremal}.

Recently, Ar\'evalo, Montejano and Rold\'an-Pensado \cite{arevalo2021zero} initiated the study of a zero-sum variant of Erickson's problem. Here we wish to avoid \emph{zero-sum squares}, squares with entries that sum to $0$.

Zero-sum problems have been well-studied since the Erd\H{o}s-Ginsburg-Ziv Theorem in 1961 \cite{erdos1961theorem}, which says that any set of $2n-1$ integers must contain a set of $n$ integers which sum to $0$ modulo $n$. Much of the research has been on zero-sum problems in finite abelian groups (see the survey \cite{gao2006zero} for details), but problems have also been studied in other settings such as on graphs (see e.g. \cite{caro2016ero, caro2019zero, caro2020zero, bialostocki1993zero}). Of particular relevance is the work of Balister, Caro, Rousseau and Yuster in \cite{balister2002zero} on submatrices of integer valued matrices where the rows and columns sum to $0 \mod p$, and the work of Caro, Hansberg and Montejano on zero-sum subsequences in bounded sum $\{-1,1\}$-sequences \cite{caro2019zerosum}.

Given an $n \times m$ matrix $M = \left( a_{i,j} \right)$ define the \emph{discrepancy} of $M$ as the sum of the entries, that is,
\[\disc(M) = \sum_{\substack{1 \leq i \leq n\\1 \leq j \leq m}} a_{i,j}. \]
We say a square $S$ is a \emph{zero-sum square} if $\disc(S) = 0$, or equivalently,
\[a_{i,j} + a_{i, j+s} + a_{i+s,j} + a_{i+s, j+s} = 0.\]
We will be interested in $\{-1,1\}$-matrices $M$ which do not contain any zero-sum squares. Clearly, matrices with at most one $-1$ cannot contain a zero-sum square and, in general, there are many such matrices when the number of $-1$s is low. But what happens if there are a similar number of 1s and $-1$s? In particular, what happens if the matrix $M$ is itself zero-sum?

An $n \times m$ $\{-1,1\}$-matrix $M = \left(a_{i,j}\right)$ is said to be \emph{$t$-split} for some $0 \leq t \leq n +m -1$ if
\[a_{i,j} = \begin{cases}
		1  & i + j \leq t + 1, \\
		-1 & i + j \geq t + 2.
	\end{cases}\]
Note that when $t = 0$ the matrix consists entirely of $-1$ entries, and when $t = n + m - 1$ the matrix consists entirely of $+1$ entries.
We say a matrix $M$ is \emph{split} if there is some $t$ such that a $t$-split matrix $N$ can be obtained from $M$ by applying vertical and horizontal reflections.
Split matrices are of particular interest since they can have low absolute discrepancy, yet they never contain a zero-sum square. However, it is not hard to check that an $n \times n$ split matrix cannot have discrepancy 0, and it may still be the case that a zero-sum matrix $M$ must contain a zero-sum square.

This was confirmed by Ar\'evalo, Montejano and Rold\'an-Pensado in \cite{arevalo2021zero}. In fact, they proved that, except when $n \leq 4$, every $n \times n$ non-split $\{-1,1\}$-matrix $M$ with $|\disc(M)| \leq n$ has a zero-sum square. They remark that it should be possible to extend their proof to give a bound of $2n$, and they conjecture that the bound $Cn$ should hold for any $C > 0$ when $n$ is large enough relative to $C$.

\begin{conjecture}[Conjecture 5 in \cite{arevalo2021zero}]
	For every $C > 0$ there is an integer $N$ such that whenever $n \geq N$ the
	following holds: every $n \times n$ non-split $\{-1, 1\}$-matrix $M$ with $|\disc(M)| \leq Cn$
	contains a zero-sum square.
\end{conjecture}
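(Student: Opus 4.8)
The plan is to prove the conjecture through the stronger quantitative statement announced in the abstract, which subsumes it: since $Cn \le n^2/4$ once $n \ge 4C$, it suffices to show that every non-diagonal $n \times n$ zero-sum square free $\{-1,1\}$-matrix $M$ with $n$ large satisfies $|\disc(M)| > n^2/4$. Writing $m$ for the size of the minority colour class, one has $|\disc(M)| = n^2 - 2m$, so $|\disc(M)| \le n^2/4$ is exactly the statement $m \ge \tfrac38 n^2$. Thus the target becomes purely structural: a zero-sum square free matrix whose minority class has at least $\tfrac38 n^2$ cells must be diagonal. Up to horizontal and vertical reflections I normalise so that the minority colour is $-1$, and everything below is carried out modulo these symmetries.

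The engine is a family of propagation lemmas coming from the rigidity of squares. The basic one is: if $a_{i,j} = a_{i,j+s} = -1$, then at least one of $a_{i+s,j}, a_{i+s,j+s}$ equals $-1$ (and similarly for row $i-s$), since otherwise these four corners form a zero-sum square; the transposed statement holds for columns, and a diagonal variant holds for two corners of a square lying on a common diagonal. First I would extract the local structure of two consecutive rows: encoding each column $j$ by the number $c_j \in \{0,1,2\}$ of $-1$s among the two entries in that column, the absence of zero-sum squares of side $1$ forces $(c_j)$ to avoid the consecutive patterns $(1,1)$, $(0,2)$ and $(2,0)$. Hence consecutive rows agree outside an isolated set of ``transition'' columns, and between transitions they agree on runs that are entirely $+1$ or entirely $-1$. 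Feeding in squares of side $s>1$ then rigidifies how these transition columns may move from one row to the next.

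With the local picture in hand, the next step is to promote it to a global description of the boundary between the two colour classes. The aim is a dichotomy: either the minority class is thin (so $m < \tfrac38 n^2$), or the boundary is a monotone staircase whose interaction with the larger squares pins its slope to exactly $\pm 1$, which is precisely the statement that $M$ is diagonal. Concretely I would track, row by row, the left and right endpoints of the minority intervals, use the propagation lemmas to show these endpoints vary monotonically, and argue that any genuine deviation from slope $\pm 1$ --- a bend in the staircase, or a non-monotone ``dip'' inside a row --- forces, via repeated propagation, an entire triangular region of additional minority cells. The careful accounting of these forced regions is what produces the constant: a non-diagonal configuration either stays below $\tfrac38 n^2$ minority cells, or it creates enough forced cells to violate zero-sum square freeness.

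The main obstacle is exactly this global assembly together with the extraction of the sharp constant. Passing from the clean two-row constraints to a monotone global boundary requires controlling long-range interactions (squares of every side $s$ at once) and handling boundary effects near the edges of the grid, where propagation to rows or columns $i \pm s$ can fall off the matrix and so fails to constrain. Getting the threshold to be exactly $n^2/4$, rather than some weaker $cn^2$, is delicate: it forces the case analysis of admissible bends to be tight, and I would expect it to be matched by an explicit non-diagonal zero-sum square free construction of discrepancy close to $n^2/4$ confirming that the bound cannot be pushed further. I expect essentially all of the genuine difficulty to lie in this quantitative structural step rather than in the local lemmas.
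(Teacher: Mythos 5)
Your reduction is sound and matches the paper's: since $Cn \le n^2/4$ once $n \ge 4C$, the conjecture follows from the statement that every large non-diagonal zero-sum square free matrix has $|\disc(M)| > n^2/4$, and your reformulation in terms of the minority class size $m \ge \tfrac38 n^2$ is correct. The local lemmas you propose are also fine (the forbidden patterns $(1,1)$, $(0,2)$, $(2,0)$ for side-$1$ squares do hold). But everything after that is a plan, not a proof: the global dichotomy --- minority class thin, or boundary a monotone staircase of slope $\pm 1$ --- \emph{is} the hard content of the theorem, and you defer it entirely (``I would track\dots'', ``the careful accounting of these forced regions is what produces the constant''). No argument is given for how the two-row constraints globalize across all square sizes $s$, and that is exactly where the difficulty sits.

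There is moreover concrete evidence that the plan as formulated would fail. You propose to track ``the left and right endpoints of the minority intervals'' row by row, which presupposes that the minority cells of each row form an interval. That is false for zero-sum square free matrices: the matrix with $a_{i,j}=-1$ exactly when $i$ and $j$ are both odd (the paper's own extremal example in Section \ref{sec:open-problems}) is zero-sum square free and non-diagonal, yet its $-1$s are isolated cells alternating along the odd rows, with no interval structure and no boundary staircase at all. This example has $m \approx n^2/4 < \tfrac38 n^2$, so it does not refute your dichotomy outright, but it shows that square-freeness alone does not force the staircase picture; you would need a separate argument that matrices with $m \ge \tfrac38 n^2$ acquire it, and none is offered. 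Your closing expectation that $n^2/4$ should be matched by a construction also points the wrong way: the best known non-diagonal construction has discrepancy close to $n^2/2$, and the paper conjectures that this is optimal, so $n^2/4$ is an artifact of the proof technique rather than a sharp structural threshold. For comparison, the paper closes the global gap by an entirely different mechanism: induction on $n$ with computer-verified base cases ($n<30$), Lemma \ref{lem:submatrix} to extract a half-size low-discrepancy submatrix (diagonal by induction), the propagation Lemma \ref{lem:struct} to pin down a large region of $M$, and then a second disjoint low-discrepancy submatrix whose induced diagonal structure contradicts the first. The induction hypothesis is what substitutes for the global structural analysis your proposal leaves open.
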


Let $f(n)$ be the absolute value of the minimum discrepancy of a non-split $\{-1,1\}$-matrix with no zero-sum squares. Ar\'evalo, Montejano and Rold\'an-Pensado proved that $f(n) \geq n + 1$ for all $n \geq 5$, and the conjecture would imply that $f(n) = \omega(n)$. We improve the lower bound on $f$ to $\floor{n^2/4} + 1$ (for all $n \geq 5$), showing that $f = \Omega(n^2)$.

\begin{theorem}\label{thm:low-bound}
	Let $n \geq 5$. Every $n \times n$ non-split $\{-1,1\}$-matrix $M$ with $|\disc(M)| \leq n^2/4$ contains a zero-sum square.
\end{theorem}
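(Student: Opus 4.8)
The plan is to argue the contrapositive: assuming $M$ is zero-sum square free, I will show that it is either diagonal or satisfies $|\disc(M)| > n^2/4$. Write $p$ for the number of $+1$ entries and $q$ for the number of $-1$ entries, so $\disc(M) = p - q$ and $p + q = n^2$. If $|\disc(M)| \le n^2/4$ then $\min(p,q) \ge 3n^2/8$, so both symbols occur frequently; this density is essentially the only place the numerical bound enters, and the whole difficulty is to show that such a \emph{dense} zero-sum square free matrix must be diagonal.

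First I would record the local ($s=1$) constraint: every contiguous $2\times 2$ block has at least three equal entries, since four $\pm 1$ values sum to $0$ exactly when two are $+1$ and two are $-1$. This already forbids two vertically adjacent horizontal sign changes (and the transposed statement), and more generally I would use squares of all admissible side lengths $s$, exploiting that for two rows $i$ and $i+s$ the four corners in columns $j, j+s$ sum to $\bigl(a_{i,j}+a_{i+s,j}\bigr) + \bigl(a_{i,j+s}+a_{i+s,j+s}\bigr)$, so a zero-sum square occurs precisely when the column sums $g_j := a_{i,j}+a_{i+s,j} \in \{-2,0,2\}$ satisfy $g_j = -g_{j+s}$. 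The freedom to vary $s$ is what lets one defeat the rigid coupling between the row gap and the column gap.

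The core step is a structural lemma: a zero-sum square free matrix in which both symbols are dense is \emph{monotone}, meaning that after possibly applying horizontal and/or vertical reflections each row is non-increasing and each column is non-increasing. I would prove this by assuming an ``inversion'' exists --- some line containing a minority symbol before a majority symbol --- and using the abundance of both symbols elsewhere to select a side length $s$ realising a square with exactly two $+1$ and two $-1$ corners via the criterion $g_j = -g_{j+s}$ above. Turning a single inversion into a genuine zero-sum square, rather than merely a local obstruction, is the main obstacle in the whole argument, and I expect the threshold $n^2/4$ to be calibrated exactly so that enough of each symbol is guaranteed to run this inversion-to-square exchange.

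Once monotonicity is established, the remaining step is short: the boundary between the $+1$ region and the $-1$ region is a staircase, and any bend in this staircase produces a contiguous $2 \times 2$ block with exactly two entries of each sign at its inner corner, i.e. a zero-sum square. Hence the staircase has no bends, so it is a straight anti-diagonal and $M$ is $t$-diagonal for some $t$, contradicting non-diagonality. As a sanity check it is worth noting that an all-but-one-symbol matrix ($|\disc| = n^2 - 2$) is genuinely zero-sum square free and non-diagonal, so some discrepancy hypothesis is unavoidable. An alternative route worth keeping in reserve is to bootstrap from the known bound of Ar\'evalo, Montejano and Rold\'an-Pensado: every contiguous $k \times k$ submatrix is again zero-sum square free, so each such window with local discrepancy at most $k$ is diagonal, and one would then glue these overlapping straight-diagonal windows into a global diagonal structure, the obstacle there being to control the local discrepancy near the sign boundary.
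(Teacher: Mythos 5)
Your reduction runs the wrong way: the final staircase step is fine (in a monotone zero-sum square free matrix, adjacent rows with interior boundaries at the same column, or boundaries differing by at least $2$, both yield a contiguous zero-sum $2\times 2$ block, so the boundary is a straight anti-diagonal), but this means your ``core structural lemma'' --- zero-sum square free plus minority density at least $3n^2/8$ implies monotone --- is \emph{equivalent} to the theorem itself, not a stepping stone towards it. You have restated the problem, and the only thing offered in its place is a plan (``assume an inversion exists \dots{} select a side length $s$ realising a square''), which you yourself flag as the main obstacle. There is a concrete reason this plan cannot succeed in the local form you describe: the matrix with $a_{i,j}=-1$ exactly when $i$ and $j$ are both odd (the construction recalled in Section 3 of the paper) is zero-sum square free, has inversions in every row and column, and has minority density about $n^2/4$ --- only marginally below your threshold of $3n^2/8$. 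So any inversion-to-square exchange must fail at density $n^2/4$ yet succeed at $3n^2/8$; an argument that merely invokes ``abundance of both symbols'' cannot see this distinction, and nothing in your sketch engages with where between these two densities the truth changes. (Note also that the paper conjectures the right threshold is near $n^2/2$; the value $n^2/4$ is what its method delivers, not a density calibrated for an exchange argument.)

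The paper's actual proof has a completely different shape, and the difference is instructive. It proceeds by induction on $n$ with computer-verified base cases $n<30$: an interpolation/averaging lemma (Lemma \ref{lem:submatrix}) produces one roughly half-sized submatrix of discrepancy at most a quarter of its area; the induction hypothesis forces that submatrix to be diagonal; a structure-propagation lemma from Ar\'evalo--Montejano--Rold\'an-Pensado (Lemma \ref{lem:struct}) then determines a large $t$-diagonal corner of $M$ together with many additional forced $1$s; the bulk of the work (Claim \ref{claim:B}) finds a \emph{second} low-discrepancy submatrix disjoint from that corner, which is again diagonal by induction, and the two propagated diagonal structures are shown to be incompatible. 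Your ``reserve'' idea --- gluing diagonal windows obtained from the known linear bound --- is closer in spirit to this, but it also fails as stated, for the reason you identify: individual windows of a zero-sum square free matrix need not have small discrepancy. The paper circumvents exactly this by needing only \emph{one} low-discrepancy window (guaranteed by interpolation, not assumed for all windows), by bootstrapping on its own quadratic bound rather than on the linear one, and by accepting a computer check for the induction's base cases. As it stands, your proposal is missing the entire mechanism that makes the theorem provable.
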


The best known construction for a non-split matrix with no zero-sum squares has discrepancy close to $n^2/2$, about twice the lower bound given here, and our computer experiments suggest that this construction is in fact optimal. Although the lower bound now only differs from the upper bound by a constant factor, closing the gap between the upper and lower bounds remains a very interesting problem and we discuss it further in Section \ref{sec:open-problems}.

\section{Proof}
For  $p\leq r$ and $q \leq s$ define the \emph{consecutive submatrix} $M[p:r, q:s]$ by
\[M[p:r, q:s] = \left(\begin{array}{cccc}
			a_{p,q}    & a_{p, q+1}   & \dotsb & a_{p, s}   \\
			a_{p+1, q} & a_{p+1, q+1} & \dotsb & a_{p+1, s} \\
			\vdots     & \vdots       & \ddots & \vdots     \\
			a_{r, q}   & a_{r+1, q}   & \dotsb & a_{r,s}
		\end{array}
	\right).
\]
Throughout the rest of this paper, we will assume that all submatrices except squares are consecutive submatrices.

To show that every zero-sum $n \times n$ $\{-1,1\}$-matrix (where $n \geq 5$) contains a zero-sum square (for $n \geq 5$), Ar\'evalo, Montejano and Rold\'an-Pensado prove that a small $t'$-split submatrix $M'$ determines many entries of the matrix $M$, and their proof leads to the following lemma. An example application is shown in Figure \ref{fig:struct}.

\begin{lemma}[\cite{arevalo2021zero}]
	\label{lem:struct}
	Let $M$ be an $n \times n $ $\{-1,1\}$-matrix with no zero-sum squares, and suppose that there is an $s \times s$ submatrix $M' = M[p: p+s-1, q: q+s-1]$ which is $t'$-split for some $2 \leq t' \leq 2s - 3$. Let $t = t' + p + q -2$, $T = \floor{3t/2}$ and suppose $t \leq n$.
	\begin{enumerate}
		\item The submatrix \[N = M\left[1: \min\{T, n\}, 1:\min\{T, n\}\right]\] is $t$-split.
	\end{enumerate}

	Furthermore, both $a_{i,j} = 1$ and $a_{j,i} = 1$ whenever $T < j \leq T + t -2$ and one of the following holds:
	\begin{enumerate}
		\setcounter{enumi}{1}
		\item $j - t < i \leq T$,
		\item $i \leq \floor{\frac{T+t+1-j}{2}}$,
		\item $i = j$.
	\end{enumerate}
\end{lemma}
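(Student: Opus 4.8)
The engine of the whole argument is the following elementary \emph{forcing rule}, which I would isolate first. Since $M$ has no zero-sum square, any square with corners $a_{i,j}, a_{i,j+s}, a_{i+s,j}, a_{i+s,j+s}$ has nonzero sum; as a sum of four values in $\{-1,1\}$ this sum is $0$ exactly when two corners are $+1$ and two are $-1$. Hence whenever three corners of a square are known and are \emph{not} all equal, the fourth is forced: it must equal the majority value among the three, since any other choice would produce a two-two split. I would also record the trivial normalisation that, writing $t = t' + p + q - 2$, the hypothesis that $M' = M[p:p+s, q:q+s]$ is $t'$-diagonal says exactly that $a_{i,j} = 1$ for $i+j \le t+1$ and $a_{i,j} = -1$ for $i+j \ge t+2$ throughout the global index range occupied by $M'$. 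Thus $M$ already agrees with the $t$-diagonal pattern on $M'$, and the condition $2 \le t' \le 2s-3$ guarantees that $M'$ contains both $+1$ and $-1$ entries with room to spare, so that non-trivial squares are available.

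For part 1 I would extend this agreement outward by induction on the anti-diagonal index $i+j$, at each step applying the forcing rule to three already-known corners. To pin an entry $a_{i,j}$ that the $t$-diagonal pattern predicts to be $-1$ (so $i+j \ge t+2$), I would use a square whose top-left corner $(i-s, j-s)$ lies in the $+1$ region and whose two off-diagonal corners $(i-s,j)$ and $(i,j-s)$ lie in the $-1$ region; this needs $s \le i+j-t-2$ together with $s \ge (i+j-t-1)/2$, a range that is nonempty precisely when $i+j \ge t+3$, while the requirement that the three corners already be known translates into $i-s, j-s \ge 1$. Symmetrically, an entry predicted to be $+1$ is forced from a square anchored at a known $-1$ corner. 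Tracking how large $i$ and $j$ may be while a legal span $s$ still exists is exactly the computation that yields the reach $t + \floor{t/2}$: near the far column the binding constraint is $s \le i-1$, which together with $s \ge (i+j-t-1)/2$ forces $i \ge \floor{t/2}+1$, and this is what makes $N = M[1:\min(t+\floor{t/2},n), 1:\min(t+\floor{t/2},n)]$ precisely the block that can be filled in.

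For the furthermore part I would work in the band $t+\floor{t/2} < j \le t + \floor{t/2} + t - 2$ and force the listed entries to be $+1$ using squares that reach back into the block $N$ just constructed. The useful configuration now is a square with a known $-1$ bottom-right corner deep in the established region and two known $+1$ off-diagonal corners, whose majority forces the top-left corner $a_{i,j}$ to equal $+1$. The three cases (2)--(4) simply record the regimes in which such a square fits: case (2) is the generic range $j-t \le i \le t+\floor{t/2}$ where a span reaching into $N$ is available directly; case (3), with its nested floor $\floor{(j-t-\floor{t/2}-1)/2}$, is the sharper bound needed for small $i$, where the admissible span shrinks by one for every two columns past $t+\floor{t/2}$; and case (4) handles the diagonal entries $i=j$ separately. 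In each case the companion identity $a_{j,i} = 1$ follows immediately by applying the same argument to the transpose $M^{\mathsf T}$, which is again zero-sum-square free and carries the same $t$-diagonal data.

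The step I expect to be the genuine obstacle is not the forcing rule itself but the bookkeeping that accompanies it: for every target entry I must choose a span $s$ that simultaneously keeps all three auxiliary corners inside the already-determined region and keeps those corners from being all equal, and I must order the induction so that entries used as corners are forced before the entries that depend on them. Getting the floors exactly right --- verifying that the ranges in (2)--(4) are precisely the positions at which a legal square exists and no earlier --- is where the care lies, and near the anti-diagonal (where $i+j$ equals $t+1$ or $t+2$) the forcing rule degenerates, so those boundary entries will need a separate, direct argument.
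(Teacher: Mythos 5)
A preliminary remark on the comparison itself: the paper never proves this lemma --- it is quoted as Claim 3 of \cite{arevalo2020zero} --- so your attempt has to be judged against what a correct argument requires, and it is missing the one mechanism any correct argument runs on. Your majority rule is stated correctly, but it provably cannot take even the first step. A square of side $w \geq 1$ meets every row and every column in either zero or two of its corners, so any square with one corner outside a known rectangular block (such as $M'$ at the start, or the current block whenever a new row or column is opened) has at least two corners outside it; hence no entry outside $M'$ can ever be the unique unknown corner of a square. What a real proof uses instead are the two-unknown relations: if the two known corners of a square have opposite signs, the two unknown corners must be \emph{equal}, and if the two known corners agree in sign, then \emph{at least one} unknown corner agrees with them. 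A new row or column is first seeded by chaining the equality relations along it and then fixing the common value with a single ``at least one'' relation; only afterwards do three-corner forcings become available. Your sketch never mentions these relations. Your ordering is also circular on its own terms: forcing an entry predicted to be $+1$ needs a $-1$ among the known corners, which can only sit at strictly larger $i+j$, while your squares for $-1$ entries use corners at strictly smaller $i+j$, so no monotone sweep in $i+j$ is well-founded. Finally, the feasibility computation you present as ``yielding the reach $t+\floor{t/2}$'' shows the opposite of what you want: the constraints $w \leq \min(i,j)-1$ and $w \geq (i+j-t-1)/2$ are compatible only when $|i-j| \leq t-1$, so entries of $N$ such as $a_{1,j}$ with $t+1 \leq j \leq t+\floor{t/2}$ --- which part 1 does assert --- are never reached by your squares; the bound $t+\floor{t/2}$ does not come from this computation.

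The ``furthermore'' part of your sketch fails for two independent reasons. First, the configuration you describe does not exist: if $(i,j)$ with $j > t+\floor{t/2}$ is the top-left corner of a square, the other three corners lie in columns $\geq j$, still further outside the established block, so they cannot serve as ``two known $+1$ off-diagonal corners'' together with ``a known $-1$ deep in the established region''. Second, and more decisively, under the normalisation you adopt (the printed definition: $+1$'s on the staircase $i+j \leq t+1$), the conclusion you are trying to force is \emph{false}: a globally $t$-diagonal matrix is zero-sum square free, satisfies every hypothesis of the lemma, and has $a_{i,j} = -1$ at every position listed in items (2)--(4), all of which have $i+j \geq t+2$. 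The furthermore part is only correct under the opposite sign convention --- the one actually used in Figure \ref{fig:struct} and in Claim \ref{claim:particular1s}, where the staircase consists of $-1$'s --- so the lemma as printed carries a sign slip that any proof must detect and resolve, for instance by proving the listed entries equal $-1$ in the stated convention. Since you commit to the printed convention and then try to manufacture $+1$'s at exactly those positions, no refinement of spans and floors can rescue the argument. The gaps are the absent pair-relation mechanism and this unresolved sign inconsistency, not bookkeeping.
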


		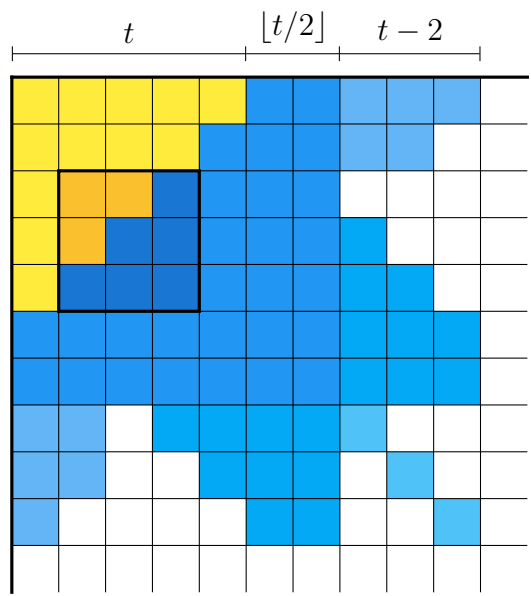
\begin{figure}
			\centering
			\begin{tikzpicture}[scale=0.5\textwidth/11cm]

				\fill[yellow5] (0,10) rectangle +(5,1);
				\fill[yellow5] (0,9) rectangle +(4,1);
				\fill[yellow5] (0,8) rectangle +(1,1);
				\fill[yellow5] (0,7) rectangle +(1,1);
				\fill[yellow5] (0,6) rectangle +(1,1);
				
				\fill[yellow7] (1,8) rectangle +(2,1);
				\fill[yellow7] (1,7) rectangle +(1,1);
				
				\fill[blue7] (3, 8) rectangle +(1,1);
				\fill[blue7] (2, 7) rectangle +(2,1);
				\fill[blue7] (1, 6) rectangle +(3,1);
				
				\fill[blue5] (4, 6) rectangle +(1, 4);
				\fill[blue5] (0, 4) rectangle +(7, 2);
				\fill[blue5] (5, 6) rectangle +(2, 5);
				
				\fill[blue3] (7, 10) rectangle +(3, 1);
				\fill[blue3] (7, 9) rectangle +(2,1);
				\fill[blue3] (0, 1) rectangle +(1, 3);
				\fill[blue3] (1,2) rectangle +(1, 2);
				
				\fill[lightblue5] (3, 3) rectangle +(4, 1);
				\fill[lightblue5] (4, 2) rectangle +(3, 1);
				\fill[lightblue5] (5, 1) rectangle +(2, 1);
				
				\fill[lightblue5] (7,4) rectangle +(1, 4);
				\fill[lightblue5] (8,4) rectangle +(1, 3);
				\fill[lightblue5] (9,4) rectangle +(1, 2);
				
				\fill[lightblue3] (7, 3) rectangle +(1,1);
				\fill[lightblue3] (8, 2) rectangle +(1,1);
				\fill[lightblue3] (9, 1) rectangle +(1,1);
				
				\draw[black] (0,1) -- +(11,0);
				\draw[black] (0,2) -- +(11,0);
				\draw[black] (0,3) -- +(11,0);
				\draw[black] (0,4) -- +(11,0);
				\draw[black] (0,5) -- +(11,0);
				\draw[black] (0,6) -- +(11,0);
				\draw[black] (0,7) -- +(11,0);
				\draw[black] (0,8) -- +(11,0);
				\draw[black] (0,9) -- +(11,0);
				\draw[black] (0,10) -- +(11,0);
				\draw[black, very thick, cap=rect] (0,11) -- +(11,0);
				
				\draw[very thick, black, cap=rect] (0,0) -- +(0,11);
				\draw[black] (1,0) -- +(0,11);
				\draw[black] (2,0) -- +(0,11);
				\draw[black] (3,0) -- +(0,11);
				\draw[black] (4,0) -- +(0,11);
				\draw[black] (5,0) -- +(0,11);
				\draw[black] (6,0) -- +(0,11);
				\draw[black] (7,0) -- +(0,11);
				\draw[black] (8,0) -- +(0,11);
				\draw[black] (9,0) -- +(0,11);
				\draw[black] (10,0) -- +(0,11);
				
				\draw[very thick] (1,6) rectangle +(3,3);
				
				\draw[|-|] (0, 11.5) -- +(5, 0);
				\draw[-|] (5,11.5) -- +(2,0);
				\draw[-|] (7, 11.5) -- +(3,0);
				
				\draw node[above] at (2.5, 11.5) {$t$};
				\draw node[above] at (6, 11.5) {$\floor{t/2}$};
				\draw node[above] at (8.5, 11.5) {$t-2$};

			\end{tikzpicture}
		
		\caption[The entries known from applying Lemma \ref{lem:struct}.]{The entries known from applying Lemma \ref{lem:struct}. The yellow squares represent $-1$s and the blue squares represent $1$s. The submatrix $M'$ is shown in a darker shade.}
		\label{fig:struct}
		\end{figure}
Note that we can apply this lemma even when it is a reflection of $M'$ which is $t$-split; we just need to suitably reflect $M$ and potentially multiply by $-1$, and then undo these operations at the end. The matrix $N$ will always contain at least one of $a_{1,1}$, $a_{1,n}$, $a_{n,1}$ and $a_{n,n}$, and if $N$ contains two, then $M$ is split.

We will also make use of the following observation. This will be used in conjunction with the above lemma to guarantee the existence of some additional $1$s, which allows us to show a particular submatrix has positive discrepancy.

\begin{observation}
	\label{obs:oneof}
	Let $M$ be an $n \times n $ $\{-1,1\}$-matrix with no zero-sum squares, and suppose that $a_{i,i} = 1$ for every $i \in [n]$. Then at least one of $a_{i,j}$ and $a_{j,i}$ is 1. In particular, $a_{i,j} + a_{j,i} \geq 0$ for all $1 \leq i ,j \leq n$.
\end{observation}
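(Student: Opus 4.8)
The plan is to prove the statement by a direct application of the zero-sum-square-free hypothesis to a single, cleverly chosen square. Fix indices $1 \le i, j \le n$. If $i = j$ the claim is immediate, since $a_{i,i} = 1$ by assumption, so I may assume $i \ne j$ and, as the claim is symmetric in $i$ and $j$, that $i < j$. Setting $s = j - i \ge 1$, I would consider the square $S$ whose top-left corner is the diagonal entry $a_{i,i}$ and whose side length is $s$; its four corners are then $a_{i,i}$, $a_{i, i+s} = a_{i,j}$, $a_{i+s, i} = a_{j,i}$ and $a_{i+s, i+s} = a_{j,j}$. The two corners lying on the main diagonal of $M$, namely $a_{i,i}$ and $a_{j,j}$, are both equal to $1$ by hypothesis.

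The key point is that the discrepancy of this square is $\disc(S) = a_{i,i} + a_{i,j} + a_{j,i} + a_{j,j} = 2 + a_{i,j} + a_{j,i}$. If both $a_{i,j} = -1$ and $a_{j,i} = -1$, then $\disc(S) = 0$, so $S$ would be a zero-sum square, contradicting the assumption that $M$ is zero-sum square free. Hence at least one of $a_{i,j}$ and $a_{j,i}$ equals $1$, which is the first assertion.

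For the final inequality, since both entries lie in $\{-1,1\}$ and at least one of them is $1$, the smallest possible value of $a_{i,j} + a_{j,i}$ is $1 + (-1) = 0$; thus $a_{i,j} + a_{j,i} \ge 0$ for all $i, j$. There is no genuine obstacle here: the only thing to be careful about is matching the definition of a square (corners $a_{i,j}, a_{i,j+s}, a_{i+s,j}, a_{i+s,j+s}$ with $s \ge 0$) to the symmetric pair under consideration, which is exactly what anchoring the square's corner at $a_{i,i}$ accomplishes, since the opposite corner then lands on the other diagonal entry $a_{j,j}$.
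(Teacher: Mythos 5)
Your proof is correct and is exactly the argument the paper intends (the paper states this as an unproved observation precisely because the argument is this short): the square anchored at $a_{i,i}$ with side $s = j-i$ has discrepancy $2 + a_{i,j} + a_{j,i}$, which forces at least one off-diagonal entry to be $1$. No issues.
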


The final lemma we will use to prove Theorem \ref{thm:low-bound} is a variation on Claim 11 from \cite{arevalo2021zero}. The main difference between Lemma \ref{lem:submatrix} and the result used by  Ar\'evalo, Montejano and Rold\'an-Pensado is that we will always find a square submatrix, which simplifies the proof of Theorem \ref{thm:low-bound}.

\begin{lemma}
	\label{lem:submatrix}
	For $n \geq 8$, every $n \times n $ $\{-1,1\}$-matrix $M$ with $|\disc(M)| \leq n^2/4$ has an $n' \times n'$ consecutive submatrix $M'$ with $|\disc(M')| \leq (n')^2/4$ for some $(n-1)/2 \leq n' \leq (n+1)/2$.
\end{lemma}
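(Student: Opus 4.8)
The plan is to prove the statement by contradiction. Set $k = \floor{n/2}$ and suppose the conclusion fails, so that for each admissible side length $\ell \in \{k, \ceil{n/2}\}$ every $\ell \times \ell$ window $W$ has $|\disc(W)| > \ell^2/4$. I will contradict $|\disc(M)| \le n^2/4$ by comparing $\disc(M)$ with the discrepancies of the four corner squares of the appropriate size.

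The first ingredient is a connectivity observation. Sliding an $\ell \times \ell$ window by one row or one column changes its discrepancy by at most $2\ell$, and every such discrepancy is congruent to $\ell^2 \pmod 2$. The forbidden interval $[-\ell^2/4, \ell^2/4]$ has width $\ell^2/2$, which exceeds $2\ell$ once $\ell \ge 5$ (the case $\ell = 4$ is saved by parity, since then window discrepancies are even and at least $6$ in absolute value), so no single slide can pass from a value above $\ell^2/4$ to one below $-\ell^2/4$. As the $\ell \times \ell$ windows are connected under slides, they must all share a sign; replacing $M$ by $-M$ if necessary (which preserves the hypothesis), I may assume every relevant window discrepancy is positive, hence $> \ell^2/4$. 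For even $n = 2k$ this already finishes the argument with $\ell = k$: the four corner $k \times k$ squares tile $M$ exactly, so the sum of their discrepancies equals $\disc(M) \le n^2/4 = k^2$; but each exceeds $k^2/4$, so the sum exceeds $k^2$, a contradiction.

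The odd case $n = 2k+1$ is the real obstacle, because no four equal squares tile $M$ and the stray central row and column contribute lower-order $\Theta(k)$ terms that are fatal when $4(\ell^2/4)$ sits right at $n^2/4$. The idea is to play the two admissible sizes $k$ and $k+1$ against each other. A containment estimate keeps their signs aligned: a $(k+1)$-window exceeds a contained $k$-window by a border of at most $2k+1$ entries, which for $k \ge 4$ is too small to flip a sign, so if all $k$-windows are positive then so are all $(k+1)$-windows, and I may assume both. I then use two exact decompositions of $M$. The four corner $k \times k$ squares are disjoint and omit exactly the central cross, giving $\sum \disc = \disc(M) - X$ with $X = \disc(\text{cross})$; the four corner $(k+1) \times (k+1)$ squares cover $M$ but double-count the central cross, giving $\sum \disc = \disc(M) + Y$. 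A direct multiplicity count shows $Y = X + 2a_{k+1,k+1}$, so $Y - X \in \{-2, 2\}$.

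The contradiction is then squeezed out by parity. Since $n^2$ is odd, $\disc(M)$ is an odd integer with $|\disc(M)| \le n^2/4$, so in fact $\disc(M) \le k^2 + k - 1$. Each sum of four corner discrepancies is even, so "all four exceed $\ell^2/4$" forces that sum to be at least $\ell^2 + 1$. The $k \times k$ decomposition then yields $X \le \disc(M) - k^2 - 1 \le k - 2$, while the $(k+1) \times (k+1)$ decomposition yields $Y \ge (k+1)^2 + 1 - \disc(M) \ge k + 3$; hence $Y - X \ge 5$, contradicting $Y - X \in \{-2, 2\}$. The heart of the lemma is exactly this odd case: crude magnitude bounds are off by a $\Theta(k)$ term, and the contradiction is recovered only by pitting the two admissible square sizes against one another and exploiting the parity of the discrepancies rather than their size.
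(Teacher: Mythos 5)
Your proposal is correct and is essentially the paper's own argument: your two decompositions are precisely the paper's equations (\ref{eqn:part}) and (\ref{eqn:part2}) (the four disjoint corner squares missing the central cross, and the four overlapping corner squares, whose sums differ by exactly twice the central entry), your connectivity-under-slides step is the paper's interpolation argument in contrapositive form, and your containment estimate aligning the two window sizes appears there as well, with the final contradiction extracted at the central entry $B_5$ in both cases. The only differences are organizational — you force all window signs to agree first and phrase the last step via parity, whereas the paper splits into same-sign and mixed-sign cases and uses the bound $\disc(A_i) \geq (n-1)^2/16 + 1/4$ — so this is the same proof, merely repackaged.
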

\begin{proof}
	We only prove this in the case $n$ is odd as the case $n$ is even is similar, although simpler.
	Partition the matrix $M$ into 9 regions as follows. Let the four $(n-1)/2 \times (n-1)/2$ submatrices containing $a_{1,1}$, $a_{1,n}$, $a_{n,n}$ and $a_{n,1}$ be $A_1, \dots, A_4$ respectively. Let the $(n-1)/2 \times 1$ submatrix between $A_1$ and $A_2$ be $B_1$ and define $B_2$, $B_3$ and $B_4$ similarly. Finally, let the central entry be $B_5$. The partition is shown in Figure \ref{fig:regions-part}.

			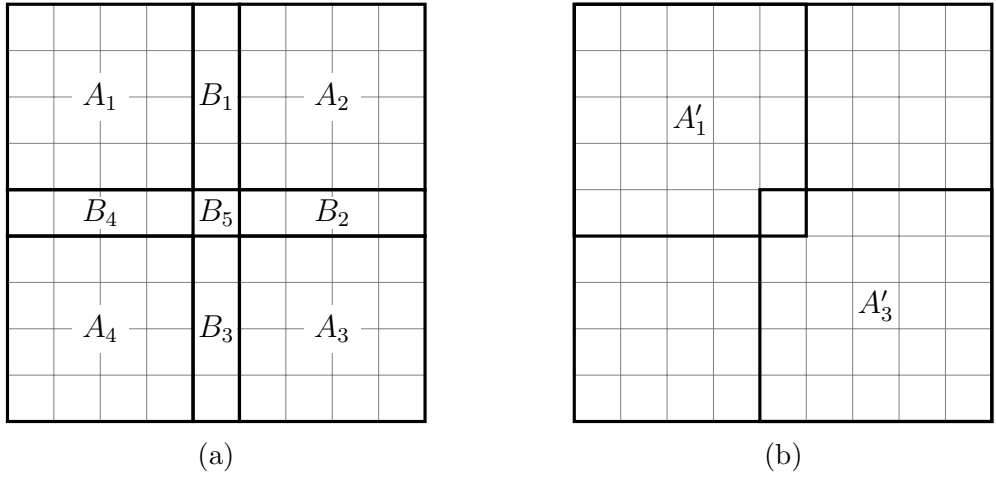
\begin{figure}
		\centering
		\begin{subfigure}{0.45\textwidth}
			\centering
			\begin{tikzpicture}[scale=\textwidth/10cm]
				\draw[step=1, very thin, gray] (0,0) grid (9,9);
				\draw[fill=none, stroke=black, very thick] (0,5) rectangle(4,9);
				\draw[fill=none, stroke=black, very thick] (5,5) rectangle (9,9);
				\draw[fill=none, stroke=black, very thick] (0,0) rectangle (4,4);
				\draw[fill=none, stroke=black, very thick] (5,0) rectangle (9,4);
				\draw node[fill=white] at (2, 7) {$A_1$};
				\draw node[fill=white] at (7, 7) {$A_2$};
				\draw node[fill=white] at (7, 2) {$A_3$};
				\draw node[fill=white] at (2, 2) {$A_4$};
				\draw[fill=none, stroke=black, very thick] (4,5) rectangle(5,9);
				\draw[fill=none, stroke=black, very thick] (5,4) rectangle(9,5);
				\draw[fill=none, stroke=black, very thick] (4,0) rectangle(5,4);
				\draw[fill=none, stroke=black, very thick] (0,4) rectangle(4,5);
				\draw node[fill=white, inner sep=1] at (4.5, 7) {$B_1$};
				\draw node[fill=white, inner sep=1] at (7, 4.5) {$B_2$};
				\draw node[fill=white, inner sep=1] at (4.5, 2) {$B_3$};
				\draw node[fill=white, inner sep=1] at (2, 4.5) {$B_4$};
				\draw node[fill=white, inner sep=1] at (4.5, 4.5) {$B_5$};
				
			\end{tikzpicture}
			\caption{}
			\label{fig:regions-part}
		\end{subfigure}\hfill%
		\begin{subfigure}{0.45\textwidth}
			\centering
			\begin{tikzpicture}[scale=\textwidth/10cm]
				\draw[step=1, very thin, gray] (0,0) grid (9,9);
				
				\draw[fill=none, black, very thick] (4,0) rectangle (9,5);
				\draw[fill=none, black, very thick] (0,4) rectangle(5,9);
				\draw[fill=none, black, very thick] (0,0) rectangle(9,9);
				\draw node[] at (2.5, 6.5) {$A'_1$};
				\draw node[] at (6.5, 2.5) {$A'_3$};

			\end{tikzpicture}
			\caption{}
			\label{fig:regions-overlap}
		\end{subfigure}%
		\caption{A subset of the regions used in the proof of Lemma \ref{lem:submatrix}.}
		\label{fig:regions}
	\end{figure}

	As these partition the matrix $M$, we have
	\begin{equation}\label{eqn:part}
		\disc(M) = \disc(A_1) + \dotsb + \disc(A_4) + \disc(B_1) + \dotsb + \disc(B_5).
	\end{equation}

	Let the overlapping $(n+1)/2 \times (n+1)/2$ submatrices containing $a_{1,1}$, $a_{1,n}$, $a_{n,n}$ and $a_{n,1}$ be $A_1', \dots, A_4'$ (as indicated in Figure \ref{fig:regions-overlap}). The submatrices $B_1, \dots, B_4$ each appear twice in the $A_i'$ and $B_5$ appears four times and, by subtracting these overlapping regions, we obtain a second equation for $\disc(M)$:
	\begin{multline}\label{eqn:part2}
		\disc(M) = \disc(A_1') + \dotsb + \disc(A_4')\\ - \disc(B_1) - \dotsb - \disc(B_4) - 3 \disc(B_5).
	\end{multline}

	If any of the $A_i$ or $A_i'$ have $|\disc(A_i)| \leq (n-1)^2/16$ or $|\disc(A_i')| \leq (n+1)^2/16$ respectively, we are done, so we may assume that this is not the case.
	First, suppose that $\disc(A_i) > (n-1)^2/16$ and $\disc(A_i') > (n+1)^2/16$ for all $i = 1,2,3,4$. Since $n - 1$ is even and $\disc(A_i) \in \mathbb{Z}$, we must have $\disc(A_i) \geq (n-1)^2/16 + 1/4$, and similarly, $\disc(A_i') \geq (n+1)^2/16 + 1/4$. Adding the equations (\ref{eqn:part}) and (\ref{eqn:part2}) we get the bound
	\[n^2/2 \geq 2 \disc(M) \geq (n+1)^2/4 + (n-1)^2/4 + 2 - 2 \disc(B_5), \]
	which reduces to $\disc(B_5) \geq 5/4$. This gives a contradiction since $B_5$ is a single square. Similarly we get a contradiction if, for every $i$, both $\disc(A_i) < - (n-1)^2/16$ and $\disc(A_i') < - (n+1)^2/16$.

	This only leaves the case where two of the 8 submatrices have different signs. If $A_i' > (n+1)^2/16$, then, for $n \geq 8$, \[A_i > (n+1)^2/16 - n > -(n-1)^2/16,\] and either $|\disc(A_i)| \leq (n-1)^2/16$, a contradiction, or  $\disc(A_i) > 0$. By repeating the argument when $\disc(A_i')$ is negative, it follows that $A_i$ and $A_i'$ have the same sign for every $i$. In particular, two of the $A_i'$ must have different signs, and we can apply an interpolation argument as in \cite{arevalo2021zero}.

	Without loss of generality, we can assume that $\disc(A_1') > (n+1)^2/16$ and $\disc(A_2') < -(n+1)^2/16$. Consider the sequence of matrices $N_0, \dots, N_{(n-1)/2} $ where \[N_i = M[1: (n+1)/2, i + 1: i + (n+1)/2].\]
	We claim that there is a $j$ such that $|\disc(N_j)| \leq (n+1)^2/16$, which would complete the proof of the lemma. By definition, $N_0 = A_1'$ and $N_{(n-1)/2} = A_2'$ so there must be some $j$ such that $\disc(N_{j-1}) > 0$ and $\disc(N_j) \leq 0$. Since the submatrices $N_{j-1}$ and $N_{j}$ share most of their entries $|\disc(N_{j-1}) - \disc(N_j)| \leq n+1$ and, as $(n+1)^2/8 > (n+1)$, it cannot be the case that $\disc(N_{j-1}) > (n+1)^2/16$ and $\disc(N_j) < -(n+1)^2/16$. This means there must be some $j$ such that $|\disc(N_j)| \leq (n+1)^2/16$, as required.
\end{proof}

Armed with the above results, we are now ready to prove our main result, but let us first give a sketch of the proof which avoids the calculations in the main proof.

\begin{proof}[Sketch proof of Theorem \ref{thm:low-bound}]
	Assume we have an $n \times n$ $\{-1,1\}$-matrix $M$ with no zero-sum squares and which has $|\disc(M)| \leq n^2/4$. We will prove the result by induction, so we assume that the result is true for $5 \leq n' < n$.

	Applying Lemma \ref{lem:submatrix} gives a submatrix $M'$ with low discrepancy. Since $M'$ also contains no zero-sum squares, we know that it is split by the induction hypothesis. Applying Lemma \ref{lem:struct} then gives a lot of entries $M$ and, in particular, a submatrix $N$ with high discrepancy. Since we are assuming that $M$ has low discrepancy, the remainder $M \setminus N$ of $M$ not in $N$ must either have low discrepancy or negative discrepancy. In both cases we will find $B$, a submatrix of $M$ with low discrepancy. When the discrepancy of $M \setminus N$ is low, we use an argument similar to the proof of Lemma \ref{lem:submatrix}, and when the discrepancy of $M \setminus N$ is negative, we find a positive submatrix using Observation \ref{obs:oneof} and then use an interpolation argument.

	By the induction hypothesis, $B$ must also be split and we can apply Lemma \ref{lem:struct} to find many entries of $M$. By looking at specific $a_{i,j}$, we will show that the two applications of Lemma \ref{lem:struct} contradict each other.
\end{proof}

We now give the full proof of Theorem \ref{thm:low-bound}, complete with all the calculations. To start the induction, we must check the cases $n < 30$ which is done using a computer. The problem is encoded as a SAT problem using PySAT \cite{imms-sat18} and checked for satisfiability with the CaDiCaL solver. The code to do this is attached to the arXiv submission.

\begin{proof}[Proof of Theorem \ref{thm:low-bound}]
	We will use induction on $n$. A computer search gives the result for all $n < 30$, so we can assume that $n \geq 30$ and that the result holds for all $5 \leq n' < n$.

	Suppose, towards a contradiction, that $M$ is an $n \times n$ matrix with no zero-sum squares and $|\disc(M)| \leq n^2/4$ .
	By Lemma \ref{lem:submatrix}, we can find an $n' \times n'$ submatrix $M' = M[p:p+n' - 1, q:q+n' - 1]$ with $(n-1)/2 \leq n ' \leq (n+1)/2$ and $|\disc(M')| \leq (n')^2/4$. By the induction hypothesis and our assumption that $M$ doesn't contain a zero-sum square, the matrix $M'$ must be split. By reflecting $M$ and switching $-1$ and $1$ as necessary, we can assume that the submatrix $M'$ is $t'$-split for some $t'$, and that $t := t' + p + q -2 \leq n$.

	We will want to apply Lemma \ref{lem:struct}, for which we need to check $2 \leq t' \leq 2n' - 3$. If $t' \leq 1$ or $t' \geq 2n' - 2$, then the discrepancy of $M'$ is \[|\disc(M')| \geq  (n')^2 -1 > (n')^2/4,\] which contradicts our choice of $M'$. In fact, since $\disc(M') \leq (n')^2/4$ and $\disc(M') \leq (n')^2 - t'(t'+1)$ we find
	\begin{equation}
		\label{eqn:tbound}
		t \geq t' \geq \frac{1}{2} \left( \sqrt{3(n')^2 + 1} -1 \right) \approx 0.433 n.
	\end{equation}

	If $t + \floor{t/2} \geq n$, the matrix $M$ is $t$-split and we are done, so we can assume that this is not the case, and that $t \leq 2n/3$. We will also need the following bound on $2t + \floor{t/2} -2$, which follows almost immediately from (\ref{eqn:tbound}).

	\begin{claim}\label{claim:tgeqnmins1}
		We have
		\[2t + \floor{t/2} -2 \geq n - 1.\]
	\end{claim}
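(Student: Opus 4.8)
The plan is to deduce the claim from the lower bound (\ref{eqn:tbound}) by an elementary manipulation, the only subtlety being that one must use that $t$ is an integer. First I would discard the floor using $\floor{t/2} \geq (t-1)/2$, which turns the desired inequality $2t + \floor{t/2} - 2 \geq n - 1$ into the cleaner sufficient condition
\[ 5t \geq 2n + 3. \]
Since $2t + \floor{t/2}$ is an integer, nothing is lost here beyond harmless rounding.

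Next I would feed in (\ref{eqn:tbound}). Rearranging $t \geq \frac{1}{2}\left(\sqrt{3(n')^2 + 1} - 1\right)$ to $2t + 1 \geq \sqrt{3(n')^2+1}$ and squaring gives $t(t+1) \geq 3(n')^2/4$, and since $n' \geq (n-1)/2$ by Lemma \ref{lem:submatrix}, this yields $t(t+1) \geq 3(n-1)^2/16$. To prove $5t \geq 2n+3$ I would argue by contradiction: if instead $5t \leq 2n + 2$, then $t \leq (2n+2)/5$ and $t + 1 \leq (2n+7)/5$, so $t(t+1) \leq (2n+2)(2n+7)/25$. Comparing the two bounds on $t(t+1)$ forces $75(n-1)^2 \leq 16(2n+2)(2n+7)$, i.e.\ $11n^2 - 438n - 149 \leq 0$, which holds only for $n \leq 40$. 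Hence for $n \geq 41$ the assumption $5t \leq 2n+2$ is impossible, and $5t \geq 2n + 3$.

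The one place requiring care --- and the main, if minor, obstacle --- is that the inequality is genuinely tight for small $n$: at $n = 31$, (\ref{eqn:tbound}) forces $t \geq 13$ and the claim holds with equality, so the real-number estimate alone cannot cover the range $30 \leq n \leq 40$ and the integrality of $t$ must be invoked. I would dispose of these finitely many cases by a direct check, noting that for each such $n$ the smallest admissible value of $n'$ (and hence the smallest $t$ permitted by (\ref{eqn:tbound})) already satisfies $5t \geq 2n + 3$. Together with the range $n \geq 41$ handled above, this gives $5t \geq 2n + 3$ for all $n \geq 30$, and the claim follows.
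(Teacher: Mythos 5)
Your strategy is sound and, if anything, more self-contained than the paper's: the paper substitutes $n' \geq (n-1)/2$ into (\ref{eqn:tbound}), bounds $\floor{t/2} \geq (t-1)/2$, solves the resulting real inequality to get $n \geq \frac{1}{11}(251 + 20\sqrt{166}) \approx 46.2$, and then delegates the entire range $30 \leq n \leq 46$ to a computer; you instead exploit the integrality of $t$ to push the by-hand threshold down to $n \geq 41$ and propose to finish the remaining cases by inspection, avoiding the computer altogether. Your algebra up to that point is correct: the reduction of the claim to the sufficient condition $5t \geq 2n+3$, the consequence $t(t+1) \geq 3(n')^2/4 \geq 3(n-1)^2/16$ of (\ref{eqn:tbound}), and the quadratic comparison $75(n-1)^2 \leq 16(2n+2)(2n+7)$, which indeed fails for $n \geq 41$.

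The gap is in the finite check. Your assertion that for each $30 \leq n \leq 40$ the smallest $t$ permitted by (\ref{eqn:tbound}) satisfies $5t \geq 2n+3$ is false at $n = 39$: there the smallest admissible $n'$ is $19$, so (\ref{eqn:tbound}) gives $t \geq \frac{1}{2}\left(\sqrt{1084}-1\right) \approx 15.96$, hence $t \geq 16$ by integrality, and $5 \cdot 16 = 80 < 81 = 2n+3$. The culprit is your opening step: replacing $\floor{t/2}$ by $(t-1)/2$ is not ``harmless rounding'' --- it loses exactly $1/2$ whenever $t$ is even --- and at $n = 39$, $t = 16$ that half is precisely what is needed. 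The claim itself does hold there, but only with the exact floor and only with equality: $2\cdot 16 + \floor{16/2} - 2 = 38 = n-1$. So the finite verification as you describe it breaks at $n = 39$. The fix is easy: in the range $30 \leq n \leq 40$, check the original inequality $2t + \floor{t/2} - 2 \geq n-1$ directly for the minimal admissible $t$ (equivalently, split the sufficient condition by parity, $5t \geq 2n+3$ for odd $t$ and $5t \geq 2n+2$ for even $t$). With that correction your argument is complete and gives a genuinely computer-free proof of the claim, in contrast to the paper.
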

	\begin{proof}
		Substituting $n' \geq (n-1)/2$ into (\ref{eqn:tbound}) gives the following bound on $t$:
		\[t \geq \frac{1}{4} \left( \sqrt{3n^2 - 6n + 7} - 2 \right).\]
		We now lower bound $\floor{t/2}$ by $(t-1)/2$ to find
		\begin{align*}
			2t + \floor{t/2} -2 & \geq 2t + \frac{t-5}{2}                                \\
			                    & \geq  \frac{5}{8} \sqrt{3n^2 - 6n + 7} - \frac{15}{4}.
		\end{align*}
		The right hand side  grows like $\frac{\sqrt{75}}{8} n$ asymptotically, which is faster than $n$, so the claim is certainly true for large enough $n$. In fact, the equation $ \frac{5}{8} \sqrt{3n^2 - 6n + 7} - \frac{15}{4} \geq n -1$ can be solved explicitly to obtain the following the bound on $n$:
		\[n \geq \frac{1}{11} \left( 251 + 20 \sqrt{166} \right) \approx 46.2.\]
		This still leaves the values $30 \leq n \leq 46$ for which the bounds above are not sufficient. These cases can be checked using a computer.
	\end{proof}

	Let $k = \ceil{5n/6}$ and let $N  = M[1:k, 1:k]$ be the $k \times k$ sub-matrix in the top left corner which contains $a_{1,1}$. We will apply Lemma \ref{lem:struct} and Observation \ref{obs:oneof} to guarantee lots of 1s in $N$, and therefore ensure $N$ has large discrepancy. This will mean that the rest of $M$ which is not in $N$ must have low discrepancy, and we can find another split submatrix $B$.

	\begin{claim}\label{claim:B}
		There is an $(n-k) \times (n-k)$ submatrix $B$ which is disjoint from $N$ and with $|\disc(B)| \leq (n-k)^2/4$.
	\end{claim}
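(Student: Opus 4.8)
The plan is to show that the entries forced by Lemma \ref{lem:struct} make $\disc(N)$ extreme, so that the complement of $N$ is forced to have controlled discrepancy, and then to locate $B$ among the $(n-k)\times(n-k)$ windows disjoint from $N$ by an interpolation argument. Write $m=n-k$; from $k=\ceil{5n/6}$ and $n\ge 30$ one checks $m\ge 5$, which is what makes the interpolation step work.

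First I would pin down $\disc(N)$. Since $M[1:L,1:L]$ with $L=t+\floor{t/2}$ is $t$-diagonal and the ``furthermore'' clauses of Lemma \ref{lem:struct} force a large further family of entries in the strips $L<j\le 2t+\floor{t/2}-2$ (which by Claim \ref{claim:tgeqnmins1} reach past column $n-1$), a large proportion of the entries of $N=M[1:k,1:k]$ are determined. For the remaining, undetermined entries I would pair $a_{i,j}$ with $a_{j,i}$ and apply Observation \ref{obs:oneof} on the index ranges where the diagonal is known to equal $1$, so that each such pair contributes nonnegatively. Summing the determined entries and using the window $0.433n\lesssim t\le 2n/3$ from (\ref{eqn:tbound}) together with Claim \ref{claim:tgeqnmins1}, this should yield an explicit bound showing $|\disc(N)|$ is of order $n^2$ with a definite sign; after normalising the reflection and sign so that $\disc(N)$ is large and positive, we obtain $\disc(M\setminus N)=\disc(M)-\disc(N)\le n^2/4-\disc(N)$, which is either small in absolute value or negative.

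Next I would split into the two cases flagged in the sketch. If $|\disc(M\setminus N)|$ is small, then the L-shaped complement has small total discrepancy and, covering it by $m\times m$ windows and averaging as in the proof of Lemma \ref{lem:submatrix}, some window has $|\disc|\le m^2/4$. If instead $\disc(M\setminus N)$ is large and negative, then some $m\times m$ window in the complement has very negative discrepancy, while Observation \ref{obs:oneof}, applied to a window together with its mirror image across the main diagonal, produces a window with nonnegative discrepancy. In both cases I have two windows disjoint from $N$ of opposite sign (or one already small). I would then interpolate: sliding an $m\times m$ window one column or one row at a time changes the discrepancy by at most $2m$, and since $m\ge 5$ we have $2m\le m^2/2$, the width of the target interval $[-m^2/4,m^2/4]$. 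Hence a sequence of windows joining a nonnegative one to a negative one cannot jump over this interval, so some window $B$ satisfies $|\disc(B)|\le m^2/4=(n-k)^2/4$, as required.

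The main obstacle is the first step: obtaining a clean, rigorous lower bound on $|\disc(N)|$, since one must carefully account for the part of $N$ that Lemma \ref{lem:struct} leaves undetermined (where Observation \ref{obs:oneof} applies only on the index ranges whose diagonal entries are $1$) and verify that the resulting bound beats $n^2/4$ uniformly for every admissible $t$ and every $n\ge 30$. Managing the geometry of the L-shaped complement in the averaging case and checking that the interpolation genuinely connects an opposite-signed pair of windows are the remaining technical points.
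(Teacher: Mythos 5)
Your plan is architecturally the paper's own: it is essentially the proof sketch given before Theorem \ref{thm:low-bound} (a large $\disc(N)$ forces the L-shaped complement to have small or negative discrepancy; Observation \ref{obs:oneof} supplies a non-negative window; interpolation with step $2(n-k) < (n-k)^2/2$ finishes). The paper implements it in a slightly different order --- it fixes eleven explicit disjoint $(n-k)\times(n-k)$ windows $B_1,\dots,B_{11}$ covering the complement of $N$ up to at most $10(n-k)$ stray entries, assumes all have $|\disc| > (n-k)^2/4$, shows $\disc(B_1) \ge -(n-k)$ via Observation \ref{obs:oneof}, interpolates if some $B_i$ is negative, and only in the remaining all-positive case invokes the lower bound on $\disc(N)$ --- but the ingredients, the sliding-window connectivity along the L, and the final inequality $\disc(N) + 11(n-k)^2/4 - 10(n-k) > n^2/4$ are the same as yours.

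The genuine gap is the step you defer as ``the main obstacle'': the lower bound on $\disc(N)$ is where essentially all the work in this claim lives, and your plan for it would not close as stated. Concretely: (i) the count of forced $1$s depends on whether the $t$-diagonal block covers all of $N$, i.e.\ on $s = \min\{k,\, t+\floor{t/2}\}$, and in the case $s = t + \floor{t/2}$ the range you quote, $t \le 2n/3$, is too weak --- there the required inequality holds only for roughly $t < 0.664n$, so one must first extract the sharper bound $t \le (5n+8)/9 \approx 0.556n$ from $t + \floor{t/2} \le k$ before the numbers close; (ii) ``uniformly for every admissible $t$ and every $n \ge 30$'' is not attainable by the smooth estimates you propose: even with exact counting the paper only gets a contradiction analytically for $n \ge 39$ (case $s = k$) and $n \ge 36$ (case $s = t + \floor{t/2}$), and the cases $30 \le n \le 38$ require separate verification with exact floor/ceiling values (done by computer in the paper) --- a provision absent from your plan. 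A smaller repair: Observation \ref{obs:oneof} applied to a window $W$ and its transpose $W^{T}$ gives $\disc(W) + \disc(W^{T}) \ge 0$ only when both index ranges of $W$ lie where the diagonal of $M$ is known to equal $1$, roughly $[(t+2)/2,\, n-1]$; this excludes the leftmost windows of the bottom strip, and row/column $n$ is uncontrolled, so the clean choice is the self-transposed corner window, which yields $\disc \ge -(n-k)$ rather than $\ge 0$ --- still ample for your interpolation step.
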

	\begin{proof}
		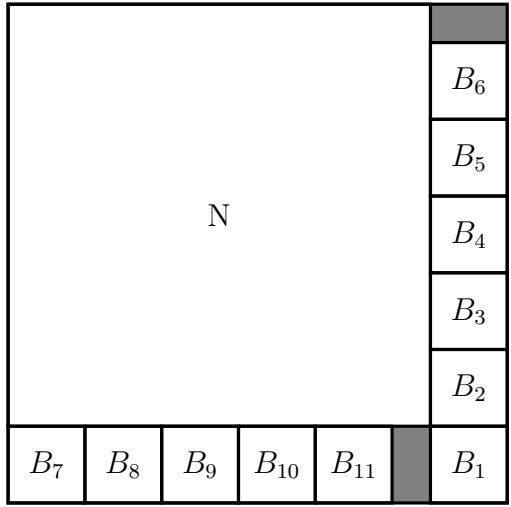
\begin{figure}
			\centering
			\begin{tikzpicture}[scale=0.45\textwidth/(12cm+2.4pt)]

				\fill[gray] (10, 0) rectangle (11, 2);
				\fill[gray] (11, 13) rectangle (13, 12);

				\draw[very thick] (0,0) rectangle (13, 13);
				\draw[very thick] (0, 13) rectangle (11, 2);
				\draw node at (5.5, 7.5) {N};

				\draw[very thick] (0, 0) rectangle (2, 2);
				\draw node at (1,1) {$B_7$};
				\draw[very thick] (2, 0) rectangle (4, 2);
				\draw node at (3,1) {$B_8$};
				\draw[very thick] (4, 0) rectangle (6, 2);
				\draw node at (5,1) {$B_9$};
				\draw[very thick] (6, 0) rectangle (8, 2);
				\draw node at (7,1) {$B_{10}$};
				\draw[very thick] (8, 0) rectangle (10, 2);
				\draw node at (9,1) {$B_{11}$};
				\draw[very thick] (11, 0) rectangle (13, 2);
				\draw node at (12,1) {$B_1$};
				\draw[very thick] (11, 2) rectangle (13, 4);
				\draw node at (12,3) {$B_2$};
				\draw[very thick] (11, 4) rectangle (13, 6);
				\draw node at (12,5) {$B_3$};
				\draw[very thick] (11, 6) rectangle (13, 8);
				\draw node at (12,7) {$B_4$};
				\draw[very thick] (11, 8) rectangle (13, 10);
				\draw node at (12,9) {$B_5$};
				\draw[very thick] (11, 10) rectangle (13, 12);
				\draw node at (12,11) {$B_6$};
			\end{tikzpicture}
			\caption[The matrix $M$ with the submatrices $N$ and $B_1$, $\dots$, $B_{11}$.]{The matrix $M$ with the submatrices $N$ and $B_1$, $\dots$, $B_{11}$. The entries of $M$ which are not in any of the submatrices are shown in grey.}\label{fig:b-part}
		\end{figure}

		Consider the 11 $(n-k) \times (n-k)$ disjoint submatrices  $B_1 , \dots, B_{11}$ of $M$ given by
		\[ B_i = \begin{cases}
				M[n - (n-k)i + 1: n - (n-k)(i-1), k + 1 : n] & 1 \leq i \leq 6,  \\
				M[k+1: n, (n-k)(i-7) + 1: (n-k)(i-6)]        & 7 \leq i \leq 11,
			\end{cases}\]
		and shown in Figure \ref{fig:b-part}. The submatrix $B_1$ contains $a_{n,n}$ and sits in the bottom right of $M$, while the others lie along the bottom and right-hand edges of $M$.

		If one of the $B_i$ satisfies $|\disc(B_i)| \leq (n-k)^2/4$, we are done by taking this submatrix as $B$, so suppose this is not the case.

		We start by using Observation \ref{obs:oneof} to show that $\disc(B_1) >  0$. Let the entries of $B_1$ be $b_{i,j}$ where $1 \leq i,j \leq n-k$. By Claim \ref{claim:tgeqnmins1}, $2t + \floor{t/2} - 2 \geq n -1$ and, applying Lemma $1$, $b_{i,i} = 1$ for all $i \leq n- k - 1$. Further, by Observation \ref{obs:oneof}, we have $b_{i,j} + b_{j,i} \geq 0$ for all $1 \leq i,j \leq n -k -1$. This means
		\[\disc(B_1) \geq (n-k - 1) - (2(n-k) - 1) = - (n-k).\]
		For $(n- k) \geq 5$, $(n-k) < (n-k)^2/4$ so we must have  $\disc(B_1) > (n-k)^2/4$.

		As $\disc(B_1) > 0 $, if $\disc(B_i) < 0$ for any $i \neq 1$, we can use an interpolation argument as in Lemma \ref{lem:submatrix} to find the claimed matrix. The argument only requires
		\[2(n-k) < \frac{(n-k)^2}{2},\]
		which is true for $(n-k) > 4$.

		We must now be in the case where $\disc(B_i) > (n-k)^2/4$ for every $i$. The bulk of the work in this case will be bounding the discrepancy of the matrix $N$, and then the discrepancy of $M$. There are $2n(n-k) - 12(n-k)^2 \leq 10(n-k)$ entries of $M$ in the gaps between the $B_i$, or in other words, there are at most $10(n-k)$ entries $a_{i,j}$ which are not contained in either $N$ or one of the $B_i$. In particular, we have
		\begin{align}
			\disc(M) & \geq \disc(N) + \disc(B_1) + \dotsb + \disc(B_{11})  - 10 (n-k) \notag \\
			         & > \disc(N) + 11 (n-k)^2/4 - 10(n-k). \label{eqn:disc}
		\end{align}

		Let $s = \min\left\{ k, t + \floor{t/2} \right\}$ so that $M[1:s, 1:s]$ is $t$ split, and let $r =  k - s$ be the number of remaining rows in $N$. Let $a_1, \dots, a_4$ be the number of 1s in $N$ guaranteed by Lemma \ref{lem:struct}, and let $a_5$ be the number of additional 1s guaranteed by also applying Observation \ref{obs:oneof}. This guarantees that at least one of $a_{i,j}$ and $a_{j,i}$ is $1$ for all $s + 1 \leq i , j \leq k$, and $a_5 \geq r(r-1)/2$.

		We have the following bounds.
		\begin{align*}
			a_1 & = s^2 - \frac{t(t+1)}{2},                     \\
			a_2 & = 2 \sum_{i=1}^r(t-i),                        \\
			a_3 & = 2 \sum_{i=1}^r \floor{\frac{t + 1 - i}{2}}, \\
			a_4 & = r,                                          \\
			a_5 & \geq \frac{r(r-1)}{2}.
		\end{align*}

		Let us first consider the case where $s = k$, so that $N$ is $t$-split.  In this case $a_2 = \dotsb = a_5 = 0$, and we can easily write down the discrepancy of $N$ as $k^2 - t(t+1)$. Since $k \geq 5n/6$, we get the bound
		\begin{align*}
			\disc(N) & \geq \frac{25n^2}{36} - t(t+1).
			\intertext{Substituting this into (\ref{eqn:disc}) and using the bounds $(n-5)/6\leq n - k \leq n/6$ we get}
			\disc(N) & > \frac{25n^2}{36} - t(t+1) + \frac{11}{4}\left( \frac{n-5}{6} \right)^2  - \frac{10n}{6} \\
			         & = \frac{1}{144} \left( 111 n^2 - 350n - 144t^2 - 144t + 275\right).
			\intertext{For $n \geq 4$, the righthand side is greater than $n^2/4$ whenever}
			t        & < \frac{1}{12} \left( \sqrt{75 n^2 - 350n + 311} - 6 \right) \approx 0.722n + o(n).
		\end{align*}
		Since we have assumed $t \leq 2n/3$, we get a contradiction for all sufficiently large $n$. In fact, we get a contradiction for all $n \geq 40$. The remaining cases need to be checked  using exact values for the floor and ceiling functions which we do with the help of a computer.

		Now we consider the case where $s = t + \floor{t/2}$ which is very similar, although more complicated. To be in this case, we must have $t + \floor{t/2} \leq k$ which implies
		\[ t + \frac{t-1}{2} \leq \frac{5(n+1)}{6},\]
		and $t \leq (5n + 8)/9 \approx 0.556n$.
		\begin{align*}
			\intertext{Start by using the bounds $(t-1)/2 \leq \floor{t/2}$ and $(t-i)/2 \leq \floor{(t + 1 - i)/2}$ to get}
			a_1 + \dotsb + a_5 & \geq \left( t + \frac{t-1}{2} \right)^2 - \frac{t(t+1)}{2} + r (2t -r - 1) + \frac{r (2t -r - 1)}{2}
			\\&\qquad  + r + \frac{r(r-1)}{2}\\
			                   & = \frac{7t^2}{4} - 2t - r^2 + 3rt - r  + \frac{1}{4}.
			\intertext{By definition, $r = k - t - \floor{t/2}$, so we get the bounds $5n/6 - t - t/2 \leq r \leq 5(n+1)/6 - t - (t-1)/2$, and substituting these in gives}
			a_1 + \dotsb + a_5 & \geq \frac{7}{4} t^2 - 2t + \frac{1}{4} -  \left( \frac{5(n+1)}{6} - t - \frac{t-1}{2}\right)^2 + 3t \left( \frac{5n}{6} -t - \frac{t}{2} \right) \\&\qquad - \left( \frac{5(n+1)}{6} - t - \frac{t-1}{2} \right)\\
			                   & = \frac{1}{36} \left( - 25n^2 + 180nt - 110n - 180 t^2 + 126t - 103 \right)
			\intertext{Plugging this into (\ref{eqn:disc}) and using the bounds $5n/6 \leq k \leq 5(n+1)/6$ we get}
			\disc(M)           & >2 (a_1 + \dotsb a_5) - \left( \frac{5(n+1)}{6} \right)^2 + \frac{11}{4} \left( \frac{n-5}{6} \right)^2 - \frac{10n}{6}                           \\
			                   & \geq \frac{1}{144} \left( - 289n^2 + 1440nt - 1430n - 1440t^2 + 1008t - 649 \right).
		\end{align*}
		When $n \geq 27$, this is greater than $n^2/4$ whenever
		\begin{align*}
			 & \frac{1}{120}\left(60n + 42 - \sqrt{350 n^2 - 9260n - 4726}\right) <           \\
			 & \qquad t < \frac{1}{120}\left(60n + 42 + \sqrt{350 n^2 - 9260n - 4726}\right),
		\end{align*}
		or approximately,
		\[0.344n < t < 0.656n.\]
		We have the bounds
		\[ \frac{1}{4} \left( \sqrt{3n^2 - 6n + 7} - 2 \right) \leq  t \leq \frac{5n + 8}{9}, \]
		and so, for $n \geq 44$, $\disc(M) > n^2/4$.

		This again leaves a few cases which we check with the help of a computer.
	\end{proof}

	Given a submatrix $B$ as in the above claim we apply the induction hypothesis, noting that $(n-k) \geq 5$ since $n \geq 30$, to find that $B$ is split. Let $C$ be the split submatrix obtained from applying Lemma 4 to $B$, and let $C$ be $\ell$-split up to rotation. Note that $\ell \geq 3$ as $(n-k) \geq 5$ and $|\disc(B)| \leq (n-k)^2/4$, and we can assume $\ell \leq 2n/3$ as $M$ is not split.

	Hence, $C$ contains exactly one of $a_{1,1}$, $a_{1,n}$, $a_{n,1}$ and $a_{n,n}$, and we will split into cases based on which one $C$ contains. We will also sometimes need to consider cases for whether the entry is $1$ or $-1$, but in all cases we will find a contradiction.

	From Lemma \ref{lem:struct} applied to $M'$ and Claim \ref{claim:tgeqnmins1}, we already know some of the entries and we highlight some important entries in the following claim.

	\begin{claim}\label{claim:particular1s}
		We have
		\begin{enumerate}
			\item $a_{j,1} = a_{1, j} = \begin{cases}
					      1  & t + 1 \leq j \leq n-1, \\
					      -1 & 1 \leq j \leq t,
				      \end{cases}$
			\item $a_{2,t} = a_{t,2} = 1$,
			\item $a_{i,i} = 1$ for all $(t+2)/2 \leq i \leq n -1$.
		\end{enumerate}
	\end{claim}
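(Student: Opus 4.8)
The plan is to read off all three statements directly from the two facts already in hand: the output of Lemma~\ref{lem:struct} applied to $M'$, and the range bound of Claim~\ref{claim:tgeqnmins1}. Since we are in the case $t + \floor{t/2} < n$, the first part of Lemma~\ref{lem:struct} makes the whole block $N = M[1:t+\floor{t/2}, 1:t+\floor{t/2}]$ diagonal, so every entry with both indices at most $t + \floor{t/2}$ is pinned down by which side of the diagonal it lies on (whether $i+j \le t+1$ or $i+j \ge t+2$). The three further conditions (2)--(4) of Lemma~\ref{lem:struct} then guarantee a band of additional $1$s for indices $t + \floor{t/2} < j \le t + \floor{t/2} + t - 2$, and Claim~\ref{claim:tgeqnmins1}, namely $2t + \floor{t/2} - 2 \ge n-1$, says precisely that this band reaches index $n-1$. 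Every index occurring in (1)--(3) therefore lands either in the diagonal block or in this band, and I would simply treat each.

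For part (2), both $a_{2,t}$ and $a_{t,2}$ have both indices at most $t + \floor{t/2}$ and satisfy $i+j = t+2$, so they sit just past the diagonal and the block structure forces them to be $1$. For part (1), the entries $a_{1,j}=a_{j,1}$ with $j \le t + \floor{t/2}$ are in the block and take the value dictated by $1+j \le t+1$ versus $1+j \ge t+2$ (giving $-1$ for $j \le t$ and $1$ for $t < j \le t+\floor{t/2}$), while for $t + \floor{t/2} < j \le n-1$ I would invoke condition (3) with $i=1$ to get $a_{1,j}=a_{j,1}=1$. For part (3), the diagonal entries $a_{i,i}$ with $(t+2)/2 \le i \le t + \floor{t/2}$ satisfy $2i \ge t+2$ and so equal $1$ from the block, and for $t + \floor{t/2} < i \le n-1$ I would use condition (4) (the case $i=j$). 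The symmetry $a_{i,j}=a_{j,i}$ is automatic, since the block is symmetric and conditions (2)--(4) assert $a_{i,j}=1$ and $a_{j,i}=1$ simultaneously.

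The one genuine check is that condition (3) really covers $i=1$ across the whole range $t + \floor{t/2} < j \le n-1$. This reduces to the floor inequality $\floor{(j - t - \floor{t/2} - 1)/2} \le \floor{t/2} - 1$; from Claim~\ref{claim:tgeqnmins1} we have $j \le n-1 \le 2t + \floor{t/2} - 2$, hence $j - t - \floor{t/2} - 1 \le t - 3$, and a short parity check confirms $\floor{(t-3)/2} \le \floor{t/2} - 1$, so the inequality holds throughout. (Condition (4) needs no such check: its range $t + \floor{t/2} < i \le 2t + \floor{t/2} - 2$ already reaches $n-1$ by the same Claim.) The only other thing to watch is the bookkeeping of the reflection and sign normalisation fixed when $M'$ was made $t'$-diagonal, so that the displayed signs in (1)--(3) come out in the orientation of the working matrix; I expect this, rather than any real difficulty, to be the main place one could slip up.
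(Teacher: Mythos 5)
Your proof is correct and is exactly the derivation the paper intends: the paper states this claim without a separate proof, as an immediate consequence of Lemma \ref{lem:struct} (the $t$-diagonal block, condition (3) with $i=1$ for part (1), and condition (4) for part (3)) combined with Claim \ref{claim:tgeqnmins1} to reach index $n-1$, which is precisely your argument, including the one nontrivial floor-inequality check $\floor{(t-3)/2} \le \floor{t/2}-1$. The sign bookkeeping you flagged is an inconsistency in the paper itself rather than a gap in your argument---the stated definition of $t$-diagonal puts $+1$s where $i+j \le t+1$, while the figures, the count $\disc(N) = k^2 - t(t+1)$, and this very claim all use the opposite convention---so your choice of working convention ($-1$s in the top-left corner) is the correct one.
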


	Suppose the submatrix $C$ contains $a_{1,1}$ so sits in the top-left corner. Since $M[1:t + \floor{t/2}, 1:t + \floor{t/2}]$ is $t$-split, $C$ must also be $t$-split. As $C$ was found by applying Lemma \ref{lem:struct} to $B$, it must contain a $-1$ from $B$. Hence, $t \geq 5n/6$ which is a contradiction as we assumed that $t \leq  2n/3$.

	Suppose instead that $C$ contains $a_{1,n}$ so sits in the top-right corner. Since $\ell \geq 3$, if the corner entry is $-1$, so is the entry $a_{1,n-1}$, but this contradicts Claim \ref{claim:particular1s}. Suppose instead that the corner entry is $1$. Since $C$ is $\ell$-split up to rotation we have, for all $1 \leq i, (n - j  + 1) \leq \ell + \floor{\ell/2}$,
	\begin{equation}\label{eqn:C}
		a_{i,j} = \begin{cases}
			-1 & i + (n -j + 1) \geq \ell + 2, \\
			1  & \text{otherwise}.
		\end{cases}
	\end{equation}

	If $n - \ell > t$, then $a_{1, n-\ell} = -1$ by (\ref{eqn:C}) and $a_{1, n-\ell} = 1$ by Claim \ref{claim:particular1s}.
	Suppose $n - \ell < t$. Then $a_{1, t} = 1$ by (\ref{eqn:C}) and $a_{1, t} = -1$ as  $M[1:t, 1:t]$ is $t$-split.
	Finally, when $n-\ell = t$, we have $a_{2,t} = -1$ by (\ref{eqn:C}) and $a_{2,t} = 1$ from Claim \ref{claim:particular1s}. Some illustrative examples of these three cases are shown in Figure \ref{fig:case-1-n}.

	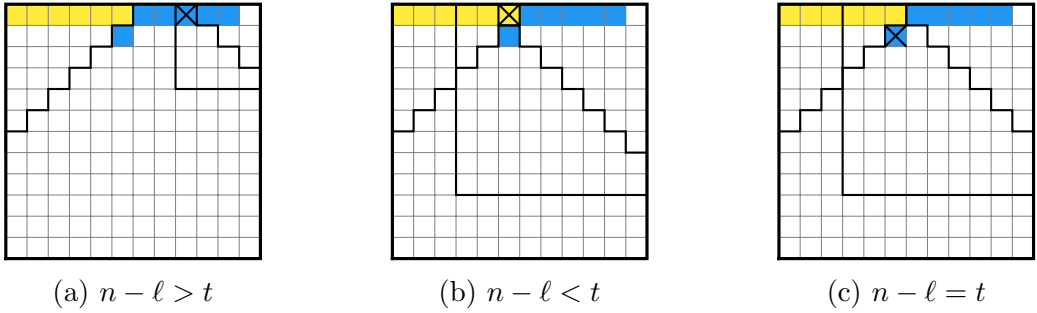
\begin{figure}
	\centering
	\begin{subfigure}{0.25\textwidth}
		\centering
		\begin{tikzpicture}[scale=(\textwidth-1.2pt)/12cm]

			\fill[fill=yellow5] (0, 11) rectangle(6, 12);
			\fill[fill=blue5] (6, 11) rectangle(11, 12);
			\fill[fill=blue5] (5, 10) rectangle(6,11);
			\draw[very thin, gray] (0,0) grid (12,12);
			\draw[thick] (6,12)
			\foreach \myvar in {6, 5, 4, ..., 0}{%
					-- (\myvar, 6 + \myvar) -- (\myvar, 5 + \myvar)
				} -- (0, 12) -- (6,12);

			\draw[thick] (9,12)
			\foreach \myvar in {9, 10, ..., 12}{%
					-- (\myvar, 21 -  \myvar) -- (\myvar, 20 - \myvar)
				};

			\draw[thick] (8,8) rectangle (12, 12);

			\draw[thick] (8, 12) -- (9,11);
			\draw[thick] (9, 12) -- (8,11);

			\draw[very thick, cap=rect] (0,0) -- (12,0) -- (12,12) -- (0,12) --(0,0);

		\end{tikzpicture}
		\caption{$n- \ell > t$}
	\end{subfigure}\hfill%
	\begin{subfigure}{0.25\textwidth}
		\centering
		\begin{tikzpicture}[scale=(\textwidth-1.2pt)/12cm]

			\fill[fill=yellow5] (0, 11) rectangle(6, 12);
			\fill[fill=blue5] (6, 11) rectangle(11, 12);
			\fill[fill=blue5] (5, 10) rectangle(6,11);
			\draw[very thin, gray] (0,0) grid (12,12);
			\draw[thick] (6,12)
			\foreach \myvar in {6, 5, 4, ..., 0}{%
					-- (\myvar, 6 + \myvar) -- (\myvar, 5 + \myvar)
				} -- (0, 12) -- (6,12);

			\draw[thick] (9,12)
			\foreach \myvar in {5, 6, ..., 12}{%
					-- (\myvar, 17 -  \myvar) -- (\myvar, 16 - \myvar)
				};

			\draw[thick] (3,3) rectangle (12, 12);

			\draw[thick] (5, 12) -- (6,11);
			\draw[thick] (6, 12) -- (5,11);

			\draw[very thick, cap=rect] (0,0) -- (12,0) -- (12,12) -- (0,12) --(0,0);

		\end{tikzpicture}
		\caption{$n- \ell < t$}
	\end{subfigure}\hfill%
	\centering
	\begin{subfigure}{0.25\textwidth}
		\centering
		\begin{tikzpicture}[scale=(\textwidth-1.2pt)/12cm]

			\fill[fill=yellow5] (0, 11) rectangle(6, 12);
			\fill[fill=blue5] (6, 11) rectangle(11, 12);
			\fill[fill=blue5] (5, 10) rectangle(6,11);
			\draw[very thin, gray] (0,0) grid (12,12);
			\draw[thick] (6,12)
			\foreach \myvar in {6, 5, 4, ..., 0}{%
					-- (\myvar, 6 + \myvar) -- (\myvar, 5 + \myvar)
				} -- (0, 12) -- (6,12);

			\draw[thick] (9,12)
			\foreach \myvar in {6, 7, ..., 12}{%
					-- (\myvar, 18 -  \myvar) -- (\myvar, 17 - \myvar)
				};

			\draw[thick] (3,3) rectangle (12, 12);

			\draw[thick] (5, 11) -- (6,10);
			\draw[thick] (6, 11) -- (5,10);

			\draw[very thick, cap=rect] (0,0) -- (12,0) -- (12,12) -- (0,12) --(0,0);

		\end{tikzpicture}
		\caption{$n - \ell =t$}
	\end{subfigure}%
	\caption[The three possible cases when $C$ contains $a_{1,n}$ and $a_{1,n} = 1$.]{The three cases when $C$ contains $a_{1,n}$ and $a_{1,n} = 1$. The yellow squares represent some of the $a_{i,j}$ which are known to be $-1$ from Claim \ref{claim:particular1s} and the blue squares those which are $1$. The square which gives the contradiction is marked with a cross.}
	\label{fig:case-1-n}
\end{figure}

	The case where $C$ contains $a_{n,1}$ is done in the same way with the rows and columns swapped.

	This leaves the case where $C$ contains $a_{n,n}$. Since $\ell \geq 3$, if the entry $a_{n, n}$ equals $-1$, so does the entry $a_{n-1, n-1}$, and this contradicts Claim \ref{claim:particular1s}. If instead $a_{n,n} = 1$, we consider the entry $a_{i, i}$ where $i = n  + 1 - \ceil{(\ell+2)/2}$, which must be $-1$. However, since $\ell \leq 2n/3$, \[n  + 1 - \ceil{(\ell+2)/2}\geq \frac{2n}{3} - \frac{1}{2} > \frac{n}{3} + 1 \geq \frac{t+2}{2},\] and $a_{i,i} = 1$ by Claim \ref{claim:particular1s}. This final contradiction is shown in Figure \ref{fig:case-n-n}.
	\begin{figure}
	\centering
	\begin{tikzpicture}[scale=0.4\textwidth/12cm]

		\foreach \myvar in {4,...,11}{%
		\fill[blue5] (\myvar-1, 12-\myvar) rectangle +(1,1);}
		
		\draw[very thin, gray] (0,0) grid (12,12);
		\draw[thick] (6,12)
		\foreach \myvar in {6, 5, 4, ..., 0}{%
			-- (\myvar, 6 + \myvar) -- (\myvar, 5 + \myvar)
		} -- (0, 12) -- (6,12);
		
		\draw[thick] (7,0)
		\foreach \myvar in {7,8, ..., 12}{%
			-- (\myvar, \myvar - 7) -- (\myvar, \myvar - 6)
		};
		
		\draw[thick] (5,0) rectangle (12, 7);

		\draw[thick] (8, 3) -- (9,4);
		\draw[thick] (8, 4) -- (9,3);
		
		\draw[very thick, cap=rect] (0,0) -- (12,0) -- (12,12) -- (0,12) --(0,0);
		
	\end{tikzpicture}
\caption[The case where $C$ contains $a_{n,n}$ and $a_{n,n} = 1$.]{The case where $C$ contains $a_{n,n}$ and $a_{n,n} = 1$. The square marked with a cross gives a contradiction.}
\label{fig:case-n-n}
\end{figure}
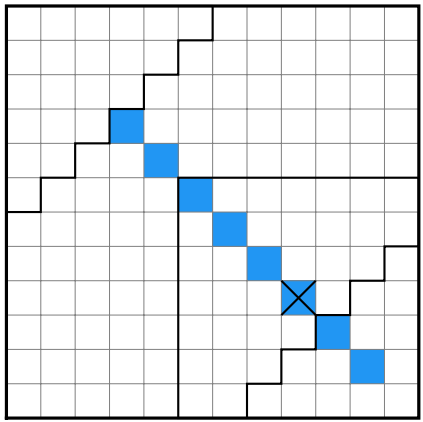
\end{proof}

We remark that it should be possible to improve the bound $n^2/4$ using a similar proof provided one can check a large enough base case. Indeed, we believe that all the steps in the above proof hold when the bound is increased to $n^2/3$, but only when $n$ is large enough. For example, Claim \ref{claim:tgeqnmins1} fails for $n = 127$ and our proof of Claim \ref{claim:B} fails for $n = 67$. Checking base cases this large is far beyond the reach of our computer check, and some new ideas would be needed here.

\section{Open problems}\label{sec:open-problems}

The main open problem is to determine the correct lower bound for the (absolute value of the) discrepancy of a non-split $\{-1,1\}$-matrix with no zero-sum squares. We have improved the lower bound to $\floor{n^2/4} + 1$, but this does not appear to be optimal.

\begin{figure}
	\centering
	\begin{tikzpicture}[scale=0.5\textwidth/9cm]

		\fill[yellow5] (0,0) rectangle (1,1);
		\fill[blue5] (1,0) rectangle (2,1);
		\fill[yellow5] (2,0) rectangle (3,1);
		\fill[blue5] (3,0) rectangle (4,1);
		\fill[yellow5] (4,0) rectangle (5,1);
		\fill[blue5] (5,0) rectangle (6,1);
		\fill[yellow5] (6,0) rectangle (7,1);
		\fill[blue5] (7,0) rectangle (8,1);
		\fill[yellow5] (8,0) rectangle (9,1);

		\fill[blue5] (0,1) rectangle (1,2);
		\fill[blue5] (1,1) rectangle (2,2);
		\fill[blue5] (2,1) rectangle (3,2);
		\fill[blue5] (3,1) rectangle (4,2);
		\fill[blue5] (4,1) rectangle (5,2);
		\fill[blue5] (5,1) rectangle (6,2);
		\fill[blue5] (6,1) rectangle (7,2);
		\fill[blue5] (7,1) rectangle (8,2);
		\fill[blue5] (8,1) rectangle (9,2);

		\fill[yellow5] (0,2) rectangle (1,3);
		\fill[blue5] (1,2) rectangle (2,3);
		\fill[yellow5] (2,2) rectangle (3,3);
		\fill[blue5] (3,2) rectangle (4,3);
		\fill[yellow5] (4,2) rectangle (5,3);
		\fill[blue5] (5,2) rectangle (6,3);
		\fill[yellow5] (6,2) rectangle (7,3);
		\fill[blue5] (7,2) rectangle (8,3);
		\fill[yellow5] (8,2) rectangle (9,3);

		\fill[blue5] (0,3) rectangle (1,4);
		\fill[blue5] (1,3) rectangle (2,4);
		\fill[blue5] (2,3) rectangle (3,4);
		\fill[blue5] (3,3) rectangle (4,4);
		\fill[blue5] (4,3) rectangle (5,4);
		\fill[blue5] (5,3) rectangle (6,4);
		\fill[blue5] (6,3) rectangle (7,4);
		\fill[blue5] (7,3) rectangle (8,4);
		\fill[blue5] (8,3) rectangle (9,4);

		\fill[yellow5] (0,4) rectangle (1,5);
		\fill[blue5] (1,4) rectangle (2,5);
		\fill[yellow5] (2,4) rectangle (3,5);
		\fill[blue5] (3,4) rectangle (4,5);
		\fill[yellow5] (4,4) rectangle (5,5);
		\fill[blue5] (5,4) rectangle (6,5);
		\fill[yellow5] (6,4) rectangle (7,5);
		\fill[blue5] (7,4) rectangle (8,5);
		\fill[yellow5] (8,4) rectangle (9,5);

		\fill[blue5] (0,5) rectangle (1,6);
		\fill[blue5] (1,5) rectangle (2,6);
		\fill[blue5] (2,5) rectangle (3,6);
		\fill[blue5] (3,5) rectangle (4,6);
		\fill[blue5] (4,5) rectangle (5,6);
		\fill[blue5] (5,5) rectangle (6,6);
		\fill[blue5] (6,5) rectangle (7,6);
		\fill[blue5] (7,5) rectangle (8,6);
		\fill[blue5] (8,5) rectangle (9,6);

		\fill[yellow5] (0,6) rectangle (1,7);
		\fill[blue5] (1,6) rectangle (2,7);
		\fill[yellow5] (2,6) rectangle (3,7);
		\fill[blue5] (3,6) rectangle (4,7);
		\fill[yellow5] (4,6) rectangle (5,7);
		\fill[blue5] (5,6) rectangle (6,7);
		\fill[yellow5] (6,6) rectangle (7,7);
		\fill[blue5] (7,6) rectangle (8,7);
		\fill[yellow5] (8,6) rectangle (9,7);

		\fill[blue5] (0,7) rectangle (1,8);
		\fill[blue5] (1,7) rectangle (2,8);
		\fill[blue5] (2,7) rectangle (3,8);
		\fill[blue5] (3,7) rectangle (4,8);
		\fill[blue5] (4,7) rectangle (5,8);
		\fill[blue5] (5,7) rectangle (6,8);
		\fill[blue5] (6,7) rectangle (7,8);
		\fill[blue5] (7,7) rectangle (8,8);
		\fill[blue5] (8,7) rectangle (9,8);

		\fill[yellow5] (0,8) rectangle (1,9);
		\fill[blue5] (1,8) rectangle (2,9);
		\fill[yellow5] (2,8) rectangle (3,9);
		\fill[blue5] (3,8) rectangle (4,9);
		\fill[yellow5] (4,8) rectangle (5,9);
		\fill[blue5] (5,8) rectangle (6,9);
		\fill[yellow5] (6,8) rectangle (7,9);
		\fill[blue5] (7,8) rectangle (8,9);
		\fill[yellow5] (8,8) rectangle (9,9);

		\draw[step=1, thin] (0,0) grid (9,9);

		\draw[very thick] (0,0) rectangle(9,9);
	\end{tikzpicture}

	\caption[A $9 \times 9$ $\{-1,1\}$-matrix with no zero-sum squares and discrepancy 31. Amongst all $\{-1,1\}$ matrices which are neither split nor contain a zero-sum square this has the smallest (in magnitude) discrepancy.]{A $9 \times 9$ $\{-1,1\}$-matrix with no zero-sum squares and discrepancy 31. Amongst all $\{-1,1\}$ matrices which are neither split nor contain a zero-sum square this has the smallest (in magnitude) discrepancy. The yellow squares represent a $-1$ and the blue squares represent a $1$. }
	\label{fig:conjecture}
\end{figure}
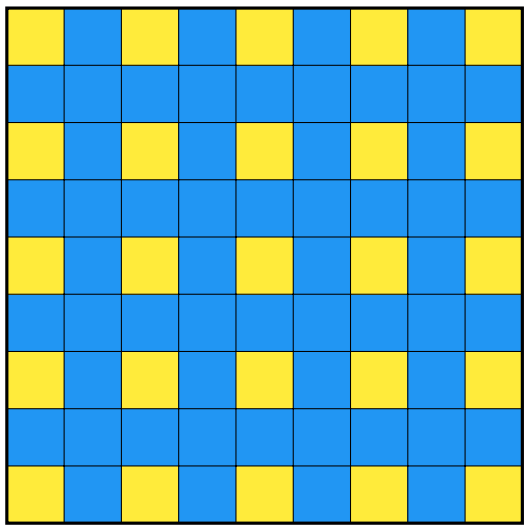

The best known construction is the following example by Ar\'evalo, Montejano and Rold\'an-Pensado \cite{arevalo2021zero}. Let $M = \left(a_{i,j}\right)$ be given by \[a_{i,j} = \begin{cases}
		-1 & \text{$i$ and $j$ are odd}, \\
		1  & \text{otherwise}.
	\end{cases}\]
This has discrepancy $n^2/2$ when $n$ is even and $(n-1)^2/2 - 1$ when $n$ is odd. With the help of a computer we have verified that this construction is best possible when $9 \leq n \leq 32$, and we conjecture that this holds true for all $n \geq 9$. In fact, our computer search shows that the above example is the unique non-split matrix avoiding zero-sum squares with minimum (in magnitude) discrepancy, up to reflections and multiplying by $-1$. An example when $n = 9$ is given in Figure \ref{fig:conjecture}.

We note that the condition $n \geq 9$ is necessary, as shown by the $8 \times 8$ matrix with discrepancy 30 given in Figure \ref{fig:counter}.
\begin{conjecture}
	Let $n \geq 9$. Every $n \times n$ non-split $\{-1, 1\}$-matrix $M$ with \[|\disc(M)| \leq \begin{cases}
			\frac{n^2}{2} - 1    & \text{$n$ is even} \\
			\frac{(n-1)^2}{2} -2 & \text{$n$ is odd}
		\end{cases}\]
	contains a zero-sum square.
\end{conjecture}

\begin{figure}
	\centering
	\begin{tikzpicture}[scale=0.5\textwidth/8cm]

		\fill[blue5] (0,0) rectangle (1,1);
		\fill[blue5] (1,0) rectangle (2,1);
		\fill[yellow5] (2,0) rectangle (3,1);
		\fill[blue5] (3,0) rectangle (4,1);
		\fill[blue5] (4,0) rectangle (5,1);
		\fill[blue5] (5,0) rectangle (6,1);
		\fill[blue5] (6,0) rectangle (7,1);
		\fill[blue5] (7,0) rectangle (8,1);

		\fill[blue5] (0,1) rectangle (1,2);
		\fill[blue5] (1,1) rectangle (2,2);
		\fill[blue5] (2,1) rectangle (3,2);
		\fill[blue5] (3,1) rectangle (4,2);
		\fill[blue5] (4,1) rectangle (5,2);
		\fill[blue5] (5,1) rectangle (6,2);
		\fill[blue5] (6,1) rectangle (7,2);
		\fill[blue5] (7,1) rectangle (8,2);

		\fill[blue5] (0,2) rectangle (1,3);
		\fill[blue5] (1,2) rectangle (2,3);
		\fill[blue5] (2,2) rectangle (3,3);
		\fill[blue5] (3,2) rectangle (4,3);
		\fill[blue5] (4,2) rectangle (5,3);
		\fill[blue5] (5,2) rectangle (6,3);
		\fill[blue5] (6,2) rectangle (7,3);
		\fill[blue5] (7,2) rectangle (8,3);

		\fill[yellow5] (0,3) rectangle (1,4);
		\fill[blue5] (1,3) rectangle (2,4);
		\fill[blue5] (2,3) rectangle (3,4);
		\fill[blue5] (3,3) rectangle (4,4);
		\fill[blue5] (4,3) rectangle (5,4);
		\fill[blue5] (5,3) rectangle (6,4);
		\fill[blue5] (6,3) rectangle (7,4);
		\fill[blue5] (7,3) rectangle (8,4);

		\fill[yellow5] (0,4) rectangle (1,5);
		\fill[yellow5] (1,4) rectangle (2,5);
		\fill[blue5] (2,4) rectangle (3,5);
		\fill[blue5] (3,4) rectangle (4,5);
		\fill[blue5] (4,4) rectangle (5,5);
		\fill[blue5] (5,4) rectangle (6,5);
		\fill[blue5] (6,4) rectangle (7,5);
		\fill[blue5] (7,4) rectangle (8,5);

		\fill[yellow5] (0,5) rectangle (1,6);
		\fill[yellow5] (1,5) rectangle (2,6);
		\fill[yellow5] (2,5) rectangle (3,6);
		\fill[blue5] (3,5) rectangle (4,6);
		\fill[blue5] (4,5) rectangle (5,6);
		\fill[blue5] (5,5) rectangle (6,6);
		\fill[blue5] (6,5) rectangle (7,6);
		\fill[yellow5] (7,5) rectangle (8,6);

		\fill[yellow5] (0,6) rectangle (1,7);
		\fill[yellow5] (1,6) rectangle (2,7);
		\fill[yellow5] (2,6) rectangle (3,7);
		\fill[yellow5] (3,6) rectangle (4,7);
		\fill[blue5] (4,6) rectangle (5,7);
		\fill[blue5] (5,6) rectangle (6,7);
		\fill[blue5] (6,6) rectangle (7,7);
		\fill[blue5] (7,6) rectangle (8,7);

		\fill[yellow5] (0,7) rectangle (1,8);
		\fill[yellow5] (1,7) rectangle (2,8);
		\fill[yellow5] (2,7) rectangle (3,8);
		\fill[yellow5] (3,7) rectangle (4,8);
		\fill[yellow5] (4,7) rectangle (5,8);
		\fill[blue5] (5,7) rectangle (6,8);
		\fill[blue5] (6,7) rectangle (7,8);
		\fill[blue5] (7,7) rectangle (8,8);

		\draw[step=1, thin] (0,0) grid (8,8);

		\draw[very thick] (0,0) rectangle(8,8);
	\end{tikzpicture}

	\caption[An $8 \times 8$ $\{-1,1\}$-matrix with no zero-sum squares and discrepancy 30.]{An $8 \times 8$ $\{-1,1\}$-matrix with no zero-sum squares and discrepancy 30. The yellow squares represent a $-1$ and the blue squares represent a $1$.}
	\label{fig:counter}
\end{figure}
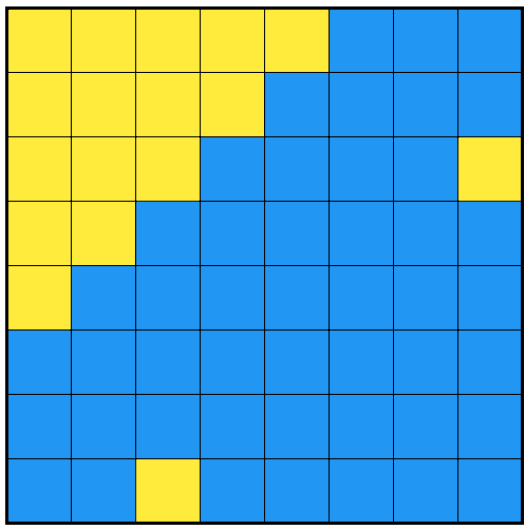

Ar\'evalo, Montejano and Rold\'an-Pensado prove their result for both $n \times n$ and $n \times (n+1)$ matrices, and computational experiments suggest that Theorem \ref{thm:low-bound} holds for $n \times (n+1)$ matrices as well. More generally, what is the best lower bound for a general $n \times m$ matrix when $n$ and $m$ are large?

\begin{problem}
Let $f(n, m)$ be the minimum $d \in \mathbb{N}$ such that there exists an $n \times m$ non-split $\{-1,1\}$ matrix $M$ with $|\disc(M)| \leq d$. What are the asymptotics of $f(n,m)$?
\end{problem}

\paragraph{Acknowledgements} The author would like to thank the anonymous referee for their helpful comments.

\bibliography{zero-sum}

\begin{thebibliography}{10}

\bibitem{arevalo2021zero}
A.~R. Ar{\'e}valo, A.~Montejano and E.~Rold{\'a}n-Pensado.
\newblock Zero-sum squares in bounded discrepancy \(\{-1, 1\}\)-matrices.
\newblock {\em Electronic Journal of Combinatorics}, {\bfseries 28}(4):P4.15,
  2021.

\bibitem{axenovich2008monochromatic}
M.~Axenovich and J.~Manske.
\newblock On monochromatic subsets of a rectangular grid.
\newblock {\em Integers}, {\bfseries 8}(1):A21, 2008.

\bibitem{bacher2010extremal}
R.~Bacher and S.~Eliahou.
\newblock Extremal binary matrices without constant 2-squares.
\newblock {\em Journal of Combinatorics}, {\bfseries 1}(1):77--100, 2010.

\bibitem{balister2002zero}
P.~Balister, Y.~Caro, C.~Rousseau and R.~Yuster.
\newblock Zero-sum square matrices.
\newblock {\em European Journal of Combinatorics}, {\bfseries 23}(5):489--497,
  2002.

\bibitem{bialostocki1993zero}
A.~Bialostocki.
\newblock Zero sum trees: A survey of results and open problems.
\newblock In N.~W. Sauer, R.~E. Woodrow and B.~Sands, editors, {\em Finite and
  Infinite Combinatorics in Sets and Logic}, pages 19--29. Springer, 1993.

\bibitem{caro2020zero}
Y.~Caro, A.~Hansberg, J.~Lauri and C.~Zarb.
\newblock On zero-sum spanning trees and zero-sum connectivity.
\newblock {\em Electronic Journal of Combinatorics}, {\bfseries 29}(1):P1.9,
  2022.

\bibitem{caro2019zero}
Y.~Caro, A.~Hansberg and A.~Montejano.
\newblock Zero-sum \({K}_m\) over \(\mathbb{Z}\) and the story of \({K}_4\).
\newblock {\em Graphs and Combinatorics}, {\bfseries 35}(4):855--865, 2019.

\bibitem{caro2019zerosum}
Y.~Caro, A.~Hansberg and A.~Montejano.
\newblock Zero-sum subsequences in bounded-sum {$\{-1, 1\}$}-sequences.
\newblock {\em Journal of Combinatorial Theory, Series A}, {\bfseries
  161}:387--419, 2019.

\bibitem{caro2016ero}
Y.~Caro and R.~Yuster.
\newblock On zero-sum and almost zero-sum subgraphs over \(\mathbb{Z}\).
\newblock {\em Graphs and Combinatorics}, {\bfseries 32}(1):49--63, 2016.

\bibitem{erdos1961theorem}
P.~Erd\H{o}s, A.~Ginzburg and A.~Ziv.
\newblock Theorem in the additive number theory.
\newblock {\em Bulletin of the Research Council of Israel, Section F},
  {\bfseries 10F}(1):41--43, 1961.

\bibitem{erickson1996introduction}
M.~J. Erickson.
\newblock {\em Introduction to Combinatorics}.
\newblock Wiley Series in Discrete Mathematics and Optimization. Wiley, 1996.

\bibitem{gao2006zero}
W.~Gao and A.~Geroldinger.
\newblock Zero-sum problems in finite abelian groups: a survey.
\newblock {\em Expositiones Mathematicae}, {\bfseries 24}(4):337--369, 2006.

\bibitem{imms-sat18}
A.~Ignatiev, A.~Morgado and J.~Marques{-}Silva.
\newblock {PySAT:} {A} {Python} toolkit for prototyping with {SAT} oracles.
\newblock In {\em SAT}, pages 428--437, 2018.

\end{thebibliography}
\bibliographystyle{abbrev-bold}

\end{document}